\newtheorem{theorem}{Theorem}[section]
\newtheorem{cor}[theorem]{Corollary}
\newtheorem{lemma}[theorem]{Lemma}
\newtheorem{prop}[theorem]{Proposition}
\newtheorem{definition}{Definition}
\newtheorem{example}{Example}
\newtheorem{remark}{Remark}
\newcommand{\RR}{\mathbb{R}}
\newcommand{\CC}{\mathbb{C}}
\newcommand{\NN}{\mathbb{N}}
\newcommand{\ZZ}{\mathbb{Z}}
\newcommand{\TT}{\mathbb{T}}
\newcommand{\Om}{\Omega}
\newcommand{\ds}{\displaystyle}
\newcommand{\p}{\partial}
\newcommand{\pd}[2]{\frac {\p #1}{\p #2}}
\newcommand{\eqnref}[1]{(\ref {#1})}
\renewcommand{\qed}{\hfill $\Box$ \medskip}
\newcommand{\beq}{\begin{equation}}
\newcommand{\eeq}{\end{equation}}
\newcommand{\SingleOmega}{\mathcal{S}_{\partial\Omega}}
\newcommand{\DoubleOmega}{\mathcal{D}_{\partial\Omega}}
\newcommand{\KstarOmega}{\mathcal{K}_{\partial\Omega}^{*}}
\newcommand{\KOmega}{\mathcal{K}_{\partial\Omega}}
\newcommand{\Kcal}{\mathcal{K}}
\newcommand{\Scal}{\mathcal{S}}
\newcommand{\Kzeta}{K^{-1/2}}
\newcommand{\Keta}{K^{1/2}}
\newcommand{\Rtwo}{\mathbb{R}^2}
\newcommand{\LtwobdOmega}{L^2(\partial\Omega)}
\newcommand{\tzeta}{\widetilde{\zeta}}
\newcommand{\teta}{\widetilde{\eta}}
\newcommand{\ttheta}{\tilde{\theta}}
\newcommand{\la}{\langle}
\newcommand{\ra}{\rangle}
\numberwithin{equation}{section}
\numberwithin{figure}{section}
\begin{document}

\newcommand{\TheTitle}{A new series solution method for the transmission problem}
\newcommand{\TheAuthors}{Y. Jung and M. Lim}

\title{{\TheTitle}\thanks{{This work is supported by the Korean Ministry of Science, ICT and Future Planning through NRF grant No. 2016R1A2B4014530 (to Y.J. and M.L.).}}}
\author{
YoungHoon Jung\thanks{\footnotesize Department of Mathematical Sciences, Korea Advanced Institute of Science and Technology, Daejeon 305-701, Korea ({hapy1010@kaist.ac.kr},  {mklim@kaist.ac.kr}).} \and Mikyoung Lim\footnotemark[2]\ \thanks{\footnotesize Cooresponding author}}

\date{\today}
\maketitle
\begin{abstract}
In this paper, we propose a novel framework for the conductivity transmission problem in two dimensions with a simply connected inclusion of arbitrary shape. 
We construct a collection of harmonic basis functions, associated with the inclusion, based on complex geometric function theory.
It is well known that the solvability of the transmission problem can be established via the boundary integral formulation in which the Neumann-Poincar\'{e} (NP) operator is involved. 
The constructed basis leads to explicit series expansions for the related boundary integral operators. In particular, the NP operator becomes a doubly infinite, self-adjoint matrix operator, whose entry is given by the Grunsky coefficients corresponding to the inclusion shape. This matrix formulation provides us a simple numerical scheme to compute the transmission problem solution and, also, the spectrum of the NP operator for a smooth domain, by use of the finite section method.
The proposed geometric series solution method requires us to know the exterior conformal mapping associated with the inclusion. 
We derive an explicit boundary integral formula, with which the exterior conformal mapping can be numerically computed, so that one can apply the method for an inclusion of arbitrary shape.  We provide numerical examples to demonstrate the effectiveness of the proposed method. \\

Dans cet article, nous proposons un nouveau cadre pour le probl\`{e}me de conductivit\'{e} en deux dimensions avec une inclusion de conductivit\'{e} simplement connect\'{e}e et de forme arbitraire. 
Bas\'{e}e sur la th\'{e}orie de la fonction g\'{e}om\'{e}trique complexe, nous construisons une collection des fonctions harmoniques li\'{e}e \`{a} l'inclusion.  Le fait que la solvabilit\'{e} du probl\`{e}me de conductivit\'{e} avec une inclusion peut \^{e}tre \'{e}tablie par l'op\'{e}rateur Neumann-Poincar\'{e} est bien connu.  Avec les fonctions de base construites, l'op\'{e}rateur Neumann-Poincar\'{e} devient une matrice auto-adjointe en dimension infinie, dont l'entr\'{e}e est d\'{e}fini par les coefficients de Grunsky associ\'{e}s \`{a} la g\'{e}om\'{e}trie de l'inclusion. Sur la base de cette formulation matricielle, nous d\'{e}rivons un sch\'{e}ma num\'{e}rique simple pour calculer la solution du probl\`{e}me de transmission et, \'{e}galement, le spectre de l'op\'{e}rateur Neumann-Poincar\'{e} sur les domaines planaires de forme arbitraire. Nous d\'{e}rivons aussi une formule de l'int\'{e}grale au bord explicitement pour la transformation conforme de l'ext\'{e}rieur. Avec cette formule nous pouvons calculer la transformation conforme de l'ext\'{e}rieur des domaines planaires de forme arbitraire. Nous effectuons des exp\'{e}riences num\'{e}riques afin de d\'{e}montrer l'efficacit\'{e} de la m\'{e}thode.

\end{abstract}

\noindent {\footnotesize {\bf AMS subject classifications.} {	35J05; 30C35;45P05} } 

\noindent {\footnotesize {\bf Keywords.} 
{Interface problem; Transmission problem; Neumann-Poincar\'{e} operator; Geometric function theory; Plasmonic resonance; Finite section method; Conformal mapping}
}


\section{Introduction}

 The aim of this paper is to provide an analytical framework for the transmission problem in two dimensions that is applicable to an object of arbitrary shape. We let $\Omega$ be a simply connected bounded domain in $\mathbb{R}^2$ with a piecewise smooth boundary, possibly with corners, where the background is homogeneous with dielectric constant $\epsilon_m$ and $\Omega$ is occupied by a material of dielectric constant $\epsilon_c$. We consider the interface problem
\beq\label{cond_eqn0}
\begin{cases}
\ds\nabla\cdot\sigma\nabla u=0\quad&\mbox{in }\RR^2, \\
\ds u(x) - H(x)  =O({|x|^{-1}})\quad&\mbox{as } |x| \to \infty
\end{cases}
\end{equation}
with $
\sigma=\epsilon_c\chi_{\Omega}+\epsilon_m\chi_{\mathbb{R}^2\setminus \overline{\Omega}}$ for a given background field $H$. The symbol $\chi$ indicates the characteristic function. The solution $u$ should satisfy the transmission condition
$$u\big|^+=u\big|^-\quad\mbox{and }\quad \epsilon_m\pd{u}{\nu}\Big|^+=\epsilon_c\pd{u}{\nu}\Big|^-\qquad\mbox{a.e. on }\partial\Omega.$$
Here, $\nu$ is the outward unit normal vector on $\p\Om$ and the symbols $+$ and $-$ indicate the limit from the exterior and interior of $\Om$, respectively.
We may interpret the conductivity problem as the quasi-static formulation of electric fields or anti-plane elasticity. 
In recent years there has been increased interest in the analysis of the transmission problem in relation to applications in various areas such as inverse problems, invisibility cloaking, and nano-photonics \cite{Ammari:2004:RSI:book, Ciraci:2012:PUL,Milton:2006:CEA,Pendry:2006:CEF}.

A classical way to solve \eqnref{cond_eqn0} is to use the layer potential ansatz
\begin{equation}\label{eqn:layerpotentialansatz}
u(x) = H(x)+\SingleOmega[\varphi](x),	
\end{equation}
where $\SingleOmega$ indicates the single layer potential associated with the fundamental solution to the Laplacian and $\varphi$ involves the inversion of $\lambda I - \KstarOmega$, where $\lambda =\frac{\epsilon_c+\epsilon_m}{2(\epsilon_c-\epsilon_m)}$  and $\KstarOmega$ is the Neumann--Poincar\'{e} (NP) operator $\KstarOmega$ (see \cite{Escauriaza:1992:RTW, Escauriaza:1993:RPS,Kellogg:1929:FPT,Kenig:1994:HAT}). We reserve the mathematical details for the next section. The boundary integral equation can be numerically solved with high precision even for domains with corners \cite{Helsing:2013:SIE}.

In the present paper, we provide a new series solution method to the transmission problem for a domain of arbitrary shape.
When $\Omega$ has a simple shape such as a disk or an ellipse, there are globally defined orthogonal coordinates, namely the polar coordinates or the elliptic coordinates. In these coordinate systems the transmission problem \eqnref{cond_eqn0} can be solved by analytic series expansion; see for example \cite{Ammari:2019:SRN, Ando:2016:APR, Milton:2006:CEA}. In \cite{Ammari:2019:SRN}, an algebraic domain, which is the image of the unit disk under the complex mapping $w+\frac{a}{w^m}$ for some $m\in\mathbb{N},a\in\mathbb{R}$, was considered. For a domain of arbitrary shape, unlike in the case of a disk or an ellipse, an orthogonal coordinate system can be defined only locally and there is none known that is defined on the whole space $\mathbb{R}^2.$ This is the main obstacle when we seek to find the explicit series solution to \eqnref{cond_eqn0}.
 As far as we know, there has been no previous work that provides a series solution to the transmission problem for a domain of arbitrary shape.
The key idea of our work is to define a curvilinear orthogonal coordinate system only on the exterior region $\mathbb{R}\setminus{\overline{\Omega}}$ by using the exterior conformal mapping associated with $\Om$. We construct harmonic basis functions in the exterior region that decay at infinity by using the coordinates. We then adopt the Faber polynomials, first introduced by G. Faber in \cite{Faber:1903:PE}, as basis functions on the interior of $\Omega.$ It is worth mentioning that the Faber polynomials have been widely adopted in classical subjects of analysis such as univalent function theory \cite{Duren:1983:UF}, analytic function approximation \cite{Smirnov:1968:FCV}, and orthogonal polynomial theory \cite{Suetin:1974:POR}. The Grunsky inequalities, which are about the Faber polynomials' coefficients, were known to be related to the Fredholm eigenvalue \cite{Schiffer:1981:FEG}. Recently, the Faber polynomials were applied to compute the conformal mapping \cite{Wala:2018:CMD}.

Our results explicitly reveal the relationship among the layer potential operators, the Faber polynomials and the Grunsky coefficients. For a set of density basis functions on $\partial\Omega$, namely $\{\zeta_m\}$, which we define in terms of the curvilinear orthogonal coordinates, we derive explicit series expressions for the single layer potential and the NP operator. Similar consideration holds for the double layer potential. The results are summarized in Theorem \ref{thm:series}. One of the remarkable consequences of our approach is that the NP operator with respect to the basis $\{\zeta_m\}$ has a doubly infinite, self-adjoint matrix representation
\begin{equation}\label{eqn:matrixKstar}
\ds[\KstarOmega]=\frac{1}{2}\begin{bmatrix}
\ds& & \vdots & & & &\vdots & &\\[2mm]
\ds& 0 & 0 & 0& 0&{\mu}_{3,1}&{\mu}_{3,2}&{\mu}_{3,3}&\\[2mm]
\ds\cdots& 0 & 0 & 0& 0&{\mu}_{2,1}&{\mu}_{2,2}&{\mu}_{2,3}&\cdots\\[2mm]
\ds& 0 & 0 & 0& 0&{\mu}_{1,1}&{\mu}_{1,2}&{\mu}_{1,3}&\\[2mm]
\ds& 0 & 0 & 0& 1&0&0&0&\\[2mm]
\ds&{\overline{{\mu}_{1,3}}}& {\overline{{\mu}_{1,2}}}\ds&{\overline{{\mu}_{1,1}}}&0&0&0&0&\\[2mm]
\ds\cdots&{\overline{{\mu}_{2,3}}}&{\overline{{\mu}_{2,2}}}\ds&{\overline{{\mu}_{2,1}}}&0&0&0&0&\cdots\\[2mm]
\ds&\overline{{\mu}_{3,3}}& {\overline{{\mu}_{3,2}}}\ds&{\overline{{\mu}_{3,1}}}&0&0&0&0&\\[2mm]
\ds& &\vdots & & & &\vdots& &
\end{bmatrix}.
\end{equation}
It is worth emphasizing that $\Kcal_{\p\Om}$ and $\Kcal^*_{\p\Om}$ are identical to the same matrix operator via (two different sets of) boundary basis functions; see the discussion below Theorem  \ref{thm:series} for more details. The matrix formulation of the NP operators provides us a simple numerical scheme to compute the transmission problem solution and, also, to approximate the spectrum of $\Kcal^*_{\p\Om}$ for a smooth domain, by use of the finite section method.

The proposed method requires us to know the exterior conformal mapping coefficients. We derive an integral formula for the exterior conformal mapping coefficients.
To state the result more simply, we identify $z=x_1+ix_2$ in $\CC$ with $x=(x_1,x_2)$ in $\RR^2$. We assume that $\Psi$ maps $\{w\in\CC:|w|>\gamma\}$ conformally onto $\CC\setminus\overline{\Om}$ with $\gamma>0$ and that it has Laurent series expansion
\beq\label{conformal:Psi}
\Psi(w)=w+a_0+\frac{a_1}{w}+\frac{a_2}{w^2}+\cdots.
\eeq
From the Riemann mapping theorem there exist unique $\gamma$ and $\Psi$ satisfying such properties; see for example \cite[Chapter 1.2]{Pommerenke:1992:BBC}.
It turns out (see Theorem \ref{thm:a_k} and its proof in section \ref{sec:transmission_solution}) that the coefficients satisfy
\begin{align*}
\ds&\gamma^2 = \frac{1}{2\pi}\int_{\p\Om}z\overline{\varphi(z)}\,d\sigma(z),\\
\ds&a_m = \frac{\gamma^{m-1}}{2\pi}\int_{\p\Om}z|\varphi(z)|^{-m+1}(\varphi(z))^m\,d\sigma(z),\quad m=0,1,\dots,
\end{align*}
where $$\varphi(z)=(I-2\Kcal^*_{\p\Om})^{-1}(\nu_1+i\nu_2).$$
This formula leads to a numerical method to compute the exterior conformal mapping coefficients for a given domain of arbitrary shape (and the interior conformal mapping by reflecting the domain across a circle). It is worth mentioning that a numerical scheme for computation of the conformal mapping in terms of the double layer potential was observed in \cite{Wala:2018:CMD}.

The rest of the paper is organized as follows. In section 2 we formulate the transmission problem \eqnref{cond_eqn0} using boundary integrals. Section 3 constructs harmonic basis functions by using the Faber polynomials. We then define two separable Hilbert spaces on $\p \Om$ and obtain their properties in section 4. Section 5 is devoted to deriving series expansions of the single and double layer potentials and the NP operators. We finally investigate the properties of the NP operators in the defined Hilbert spaces and provide the numerical scheme to compute the solution to the transmission problem and the spectrum of $\Kcal^*_{\p\Om}$ in section 6, and we conclude with some discussion.

\section{Boundary integral formulation}\label{sec:integralformulation}

For $\varphi\in \LtwobdOmega$, we define 
\begin{align*}
	\SingleOmega[\varphi](x)&=\int_{\partial\Omega}\Gamma(x-y)\varphi(y)\,d\sigma(y),~~~~x\in\Rtwo,\\[1.5mm]
	\DoubleOmega[\varphi](x)&=\int_{\partial\Omega}\frac{\partial}{\partial\nu_y}\Gamma(x-y)\varphi(y)\,d\sigma(y),~~~~x\in\Rtwo\setminus\partial\Omega,
\end{align*}
where  $\Gamma$ is the fundamental solution to the Laplacian, {\it i.e.}, $$\Gamma(x)=\frac{1}{2\pi}\ln|x|$$ and $\nu_y$ denotes the outward unit normal vector on $\partial\Omega$. We call $\SingleOmega[\varphi]$ a single layer potential and $\DoubleOmega[\varphi]$ a double layer potential associated with the domain $\Om$. 
The Neumann-Poincar\'{e} (NP) operators $\Kcal_{\Om}$ and $\Kcal_{\Om}^*$ are defined as
\begin{align*}
	\ds\KstarOmega[\varphi](x)=p.v.\frac{1}{2\pi}\int_{\partial\Omega}\frac{\left<x-y,\nu_x\right>}{|x-y|^2}\varphi(y)\,d\sigma(y),\\[1.5mm]
	\ds	\KOmega[\varphi](x)=p.v.\frac{1}{2\pi}\int_{\partial\Omega}\frac{\left<y-x,\nu_y\right>}{|x-y|^2}\varphi(y)\,d\sigma(y).
\end{align*}
Here $p.v$ denotes the Cauchy principal value. One can easily see that $\KstarOmega$ is the $L^2$ adjoint of $\KOmega$.

The single and double layer potentials satisfy the following jump relations on the interface, as shown in \cite{Verchota:1984:LPR}:
\begin{align}
	\ds	\SingleOmega[\varphi]\Big|^{+}(x)&=\SingleOmega[\varphi]\Big|^{-}(x)~~~~~~~~\text{a.e. }x\in\partial\Omega,\notag\\[1.5mm]
	\ds\frac{\partial}{\partial\nu}\SingleOmega[\varphi]\Big|^{\pm}(x)&=\left(\pm\frac{1}{2}I+\KstarOmega\right)[\varphi](x)~~~~~~~~\text{a.e. }x\in\partial\Omega\label{eqn:Kstarjump},\\[1.5mm]
	\ds	\DoubleOmega[\varphi]\Big|^{\pm}(x)&=\left(\mp\frac{1}{2}I+\KOmega\right)[\varphi](x)~~~~~~~~\text{a.e. }x\in\partial\Omega\notag,\\[1.5mm]
	\ds	\frac{\partial}{\partial\nu}\DoubleOmega[\varphi]\Big|^{+}(x)&=\frac{\partial}{\partial\nu}\DoubleOmega[\varphi]\Big|^{-}(x)~~~~~~~~\text{a.e. }x\in\partial\Omega.\notag
\end{align}
Due to the jump formula \eqnref{eqn:Kstarjump}, the solution to \eqnref{cond_eqn0} can be expressed as 
\beq\label{umh}
u(x)=H(x)+\Scal_{\p \Om}[\varphi](x),\quad x\in\RR^2,
\eeq
where $\varphi$ satisfies
\beq\label{eqn:boundaryintegral_varphi}
\varphi=(\lambda I-\Kcal_{\p \Om}^*)^{-1}\left[\nu\cdot\nabla H\right]\quad\mbox{on }\p \Om
\eeq
with $\lambda =\frac{\epsilon_c+\epsilon_m}{2(\epsilon_c-\epsilon_m)}$. 
For $|\lambda|\geq 1/2$, the operator $\lambda I -\Kcal_{\p\Om}^*$ is invertible on $L^2_0(\p\Om)$ \cite{Escauriaza:1992:RTW,Kellogg:1929:FPT}; see \cite{Ammari:2013:MSM:book, Ammari:2004:RSI:book} for more details and references.  
 Note that $\lambda$ in \eqnref{eqn:boundaryintegral_varphi} belongs to the resolvent of $\Kcal^*_{\p\Om}$ for any $0<\epsilon_c/\epsilon_m\neq 1<\infty$.

Let us review some properties of the NP operators.
The operator $\Kcal_{\p\Om}$ is symmetric in $L^2(\p \Om)$ only for a disk or a ball \cite{Lim:2001:SBI}. However, $\Kcal_{\p\Om}$ and $\Kcal^*_{\p\Om}$ can be symmetrized using Plemelj's symmetrization principle (see \cite{Khavinson:2007:PVP})
\beq\label{eqn:symmetrization}
\Scal_{\p\Om}\Kcal_{\p\Om}^*=\Kcal_{\p\Om}\Scal_{\p\Om}.
\eeq
We denote by $H^{-1/2}_0(\p\Om)$ the space of functions $u$ contained in $H^{-1/2}(\p\Om)$ such that $\la u, 1\ra_{-1/2,1/2}=0$, where $\la\cdot,\cdot\ra_{-1/2,1/2}$ is the duality pairing between the Sobolev spaces $H^{-1/2}(\p\Om)$ and $H^{1/2}(\p\Om)$. The operator $\Kcal_{\p\Om}^*$ is self-adjoint in $\mathcal{H}^*$ which is the space  $H^{-1/2}_0(\p\Om)$ equipped with the new inner product 
\beq
\la \varphi,\psi\ra_{\mathcal{H}^*}:=-\la \varphi,\Scal_{\p\Om}[\psi]\ra_{-1/2,1/2}.
\eeq
 The spectrum of $\Kcal_{\p\Om}^*$ on $\mathcal{H}^*$ lies in $(- 1/2 , 1/2)$ \cite{Escauriaza:2004:TPS, Fabes:1992:SRC, Kellogg:1929:FPT}; see also \cite{Kang:2018:SPS,Krein:1998:CLO} for the permanence of the spectrum for the NP operator with different norms. If $\p\Om$ is $C^{1,\alpha}$ with some $\alpha>0$, then $\mathcal{K}^*_{\p D}$ is compact as well as self-adjoint on $\mathcal{H}^*$. Hence, $\Kcal_{\p\Om}^*$ has discrete real eigenvalues contained in $(-1/2,1/2)$ that accumulate to $0$.

Plasmonic materials, of which permittivity has a negative real part and a small loss parameter, admit the so-called plasmonic resonance when the corresponding $\lambda$ is very close to the spectrum of $\Kcal_{\p \Om}^*$.
We refer the reader to \cite{Ando:2016:APR, Bonnetier:2018:PRB:preprint, Helsing:2017:CSN,Helsing:2013:PCC,Kang:2017:SRN, Perfekt:2017:ESN} and references therein for recent results on the spectral properties of the NP operators and plasmonic resonance.
%

\section{Geometric harmonic basis}
In this section, we construct a set of harmonic basis functions based on the exterior conformal mapping and the Faber polynomials. We introduce only the key properties of Faber polynomials in this section; however, we offer more information in the appendix in order to make this paper self-contained.
\subsection{Faber Polynomials}\label{sec:Faber}
	Let $z=\Psi(w)$ be the exterior conformal mapping associated with $\Om$ given by \eqnref{conformal:Psi}. The mapping $\Psi$ uniquely defines a sequence of $m$-th order monic polynomials $\{F_m(z)\}_{m=0}^\infty$, called the Faber polynomials, via the generating function relation
\beq\label{eqn:Fabergenerating}
\frac{\Psi'(w)}{\Psi(w)-z}=\sum_{m=0}^\infty \frac{F_m(z)}{w^{m+1}},\quad z\in\overline{\Om},\ |w|>\gamma.
\eeq
In what follows, $\partial\Omega_r$ denotes the image of $|w|=r~(r\geq\gamma)$ under the mapping $\Psi(w)$ and $\Omega_r$ is the region enclosed by $\partial\Omega_r.$
For every fixed $z\in\overline{\Omega_r}$ with $r\geq\gamma$,
the series \eqref{eqn:Fabergenerating} converges in the domain $|w|>r$ and, furthermore, it uniformly converges 
in the closed domain $|w|\geq r$ if $z\in\Omega_r$ (see \cite{Smirnov:1968:FCV} for more details).
Let us state the properties of Faber polynomials that are the key technical tools of this paper (see appendix \ref{section:appdixFaber} for the derivation).

\begin{itemize}
\item Decomposition of the fundamental solution to the Laplacian:
 \begin{equation}\label{eqn:log_decomp}
\log(\Psi({w})-{z})=\log w -\sum_{m=1}^\infty \frac{1}{m}F_m({z})w^{-m},\quad  |w|>r,\ z\in\Omega_r.
\end{equation}
One can derive this equation by integrating \eqnref{eqn:Fabergenerating} with respect to $w.$

\item Series expansion in the region $\mathbb{C}\setminus\overline{\Omega}$:
 \begin{equation}\label{eqn:Faberdefinition}
	F_m(\Psi(w))
	=w^m+\sum_{k=1}^{\infty}c_{m,k}{w^{-k}},\quad m=1,2,\dots.
\end{equation}
The coefficients $c_{m,k}$ are called the Grunsky coefficients.

\item Grunsky identity:
\begin{equation}\label{eqn:Grunskyidentity}
	k c_{m,k}=m c_{k,m}\quad \text{for any } m,k\geq1.	
\end{equation}

\item Bounds on the Grunsky coefficients:
\begin{equation}\label{eqn:GrunskyBounds}
\sum_{k=1}^\infty \left|  \sqrt{\frac{k}{m}}\frac{c_{m,k}}{\gamma^{m+k}} \right|^2\leq 1\quad \text{for any } m\geq 1.
\end{equation}

\end{itemize}

\begin{remark}
	Once the coefficients of the exterior conformal mapping $\gamma, a_0, a_1, a_2\dots$ are known, the	Faber polynomials and the Grunsky coefficients can be easily  computed via the recursion formulas
	\eqnref{eqn:Faberrecursion} and \eqnref{eqn:cnkrecursion} in the appendix.
\end{remark}

\subsection{Geometric harmonic basis functions}\label{sec:geometric_basis}

While it is straightforward to find appropriate harmonic basis functions for a circle or an ellipse, it becomes complicated for a domain with arbitrary geometry. Since the solution to \eqnref{cond_eqn0} is harmonic in each of the two regions $\Om$ and $\CC\setminus\overline{\Om}$, we require two sets of basis functions for the interior and exterior of $\Om$, separately. We construct two systems satisfying the following:
\begin{itemize}
\item[(a)] {\it Interior harmonic basis}, consisting of polynomial functions, such that any harmonic function in a domain containing $\overline{\Om}$ can be expressed as a series of these basis functions. 
\item[(b)]  {\it Exterior harmonic basis}, consisting of harmonic functions in $\mathbb{C}\setminus\overline{\Om}$, such that any harmonic function in $\CC\setminus\overline{\Om}$ which decays like $O(|x|^{-1})$ as $|x|\rightarrow\infty$ can be expressed as a series of these basis functions.
\end{itemize}
In addition, we require the two sets of basis functions to have explicit relations on $\partial \Omega$ such that the interior and exterior boundary values of the solution can be matched.

We remind the reader that the Faber polynomials are monic and admit the series expansion in the exterior region $\mathbb{C}\setminus\overline{\Omega}$ in terms of $w^{\pm k}$, $k\in\NN$, as shown in \eqnref{eqn:Faberdefinition}. Each $w^{-k}$ decays to zero as $|z|\rightarrow\infty$ since 
$
w=\Psi^{-1}(z)=z+O(1)
$
 and is harmonic (see \eqref{eqn:laplacian}). Because of these reasons we set
  \begin{itemize}
\item[(a)] {\it Interior harmonic basis}: $\left\{F_m(z)\right\}_{m\in\NN},$
\item[(b)]  {\it Exterior harmonic basis}: $\left\{w^{m}(z)\right\}_{m\in\ZZ}$.
\end{itemize}
Here, $w^m(z)$ means the function $\left(\Psi^{-1}(z)\right)^m$.

\section{Two Hilbert spaces on $\partial\Omega$: definition and duality}\label{sec:Kpm:def}
In the previous section we defined the harmonic basis functions in the interior and exterior of the domain $\Om$. We recall the reader that the boundary integral formulation \eqnref{umh} is involved with the density function $\varphi$ on $\partial\Omega$ satisfying \eqnref{eqn:boundaryintegral_varphi}. In this section we construct a basis for density functions on $\p\Om$ with which one can reformulate \eqnref{umh} and \eqnref{eqn:boundaryintegral_varphi} in series form.

First, we introduce the curvilinear coordinate system in the exterior region $\mathbb{C}\setminus{\overline{\Omega}}$ associated with the exterior conformal mapping $\Psi$. Then, we construct basis functions on $\p\Om$ with the coordinates and define two Hilbert spaces $K^{\pm 1/2}(\partial\Omega)$ that are dual to each other and can be identified with $l^2(\CC)$ via the Fourier series expansions.

\subsection{Orthogonal coordinates in $\mathbb{C}\setminus \Omega$}\label{sec:coordinate}

In view of the domain representation using its exterior conformal mapping $\Psi$, it is natural to adopt the curvilinear coordinate system generated by $z=\Psi(w)$. To deal with the transmission condition on $\partial\Omega$ in terms of $\Psi$, the regularity of $\Psi$ up to the boundary $\p\Om$ should be assumed. The continuous extension of the conformal mapping to the boundary is well known (Carath\'{e}odory theorem \cite{Caratheodory:1913:GBR}). When the boundary $\partial\Omega$ is $C^{1,\alpha}$, the exterior conformal mapping $\Psi$ allows the $C^{1,\alpha}$ extension to $\partial\Omega$ by Kellogg-Warschawski theorem \cite[Theorem 3.6]{Pommerenke:1992:BBC}.
If $\Om$ has a corner point, then the regularity of $\Psi$ depends on the angle of $\p\Om$ at the corner point.
The regularity of the interior conformal mapping for a domain with corners is well established (see for example \cite{Pommerenke:1992:BBC}), and the results can be equivalently translated into the exterior case.
We provide the regularity result for the exterior conformal mapping $\Psi$ in terms of the exterior angles of corner points in appendix \ref{appen:boundarybehavior}.
Figure \ref{fig:angle} illustrates the exterior angle at a corner point.
\begin{figure}[!h]
\centering
\includegraphics[width=5cm]{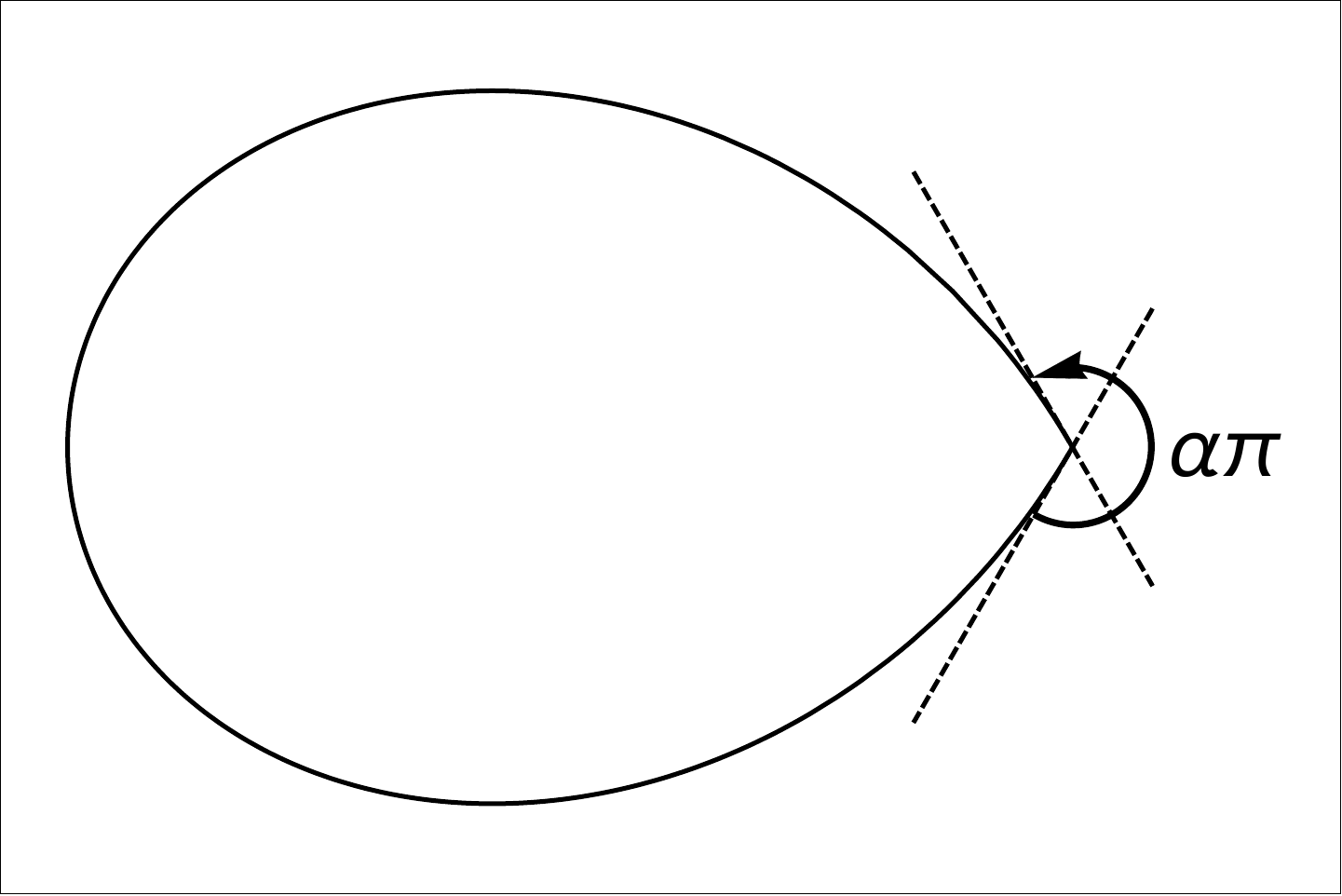}
\caption{A domain with one corner point whose exterior angle is $\alpha \pi$.}\label{fig:angle}
\end{figure}

Here and after, we assume that the boundary $\partial\Omega$ is a piecewise $C^{1,\alpha}$ Jordan curve, possibly with a finite number of corner points without inward or outward cusps. We define the coordinate system which associates each $z\in \CC\setminus\Om$ with the modified polar coordinate $(\rho,\theta)\in[\rho_0,\infty)\times[0,2\pi)$ via the relation
$$z=\Psi(e^{\rho+i\theta}).$$
We let $\Psi(\rho,\theta)$ to indicate $\Psi(e^{\rho+i\theta})$ for convenience.

Denote the scale factors as $h_\rho=|\frac{\partial \Psi}{\partial\rho}|$ and $h_\theta=|\frac{\partial \Psi}{\partial\theta}|$. 
The partial derivatives satisfy $i\frac{\partial \Psi}{\partial \rho}=\frac{\partial \Psi}{\partial \theta}$ so that $\{\frac{\partial \Psi}{\partial\rho},\frac{\partial \Psi}{\partial\theta}\}$  are orthogonal vectors in $\CC$ and the scale factors coincide.
 We set $$h(\rho,\theta):=h_\rho=h_\theta.$$ We remark that the scale factor $h(\rho,\theta)$ is integrable on the boundary $\partial\Omega$ (for the proof see Lemma \ref{lemma:regularity} in the appendix).
One can easily show, for a function $u$ defined in the exterior of $\Om$, that 
\beq
	\Delta u=\frac{1}{h^2(\rho,\theta)}\left(\frac{\partial^2 u}{\partial \rho^2}+\frac{\partial^2 u}{\partial \theta^2}\right).\label{eqn:laplacian}
\eeq

On $\p\Om=\{\Psi(\rho_0,\theta):\theta\in[0,2\pi)\}$, the length element is $d\sigma(z)=h(\rho_0,\theta)d\theta$ for $z=\Psi(\rho_0,\theta)$. The exterior normal derivative of $u(z)=(u\circ\Psi)(\rho,\theta)$ is
 \beq
 	\frac{\partial u}{\partial \nu}\Big|_{\p\Om}^{+}(z)=\frac{1}{h}\frac{\partial }{\partial \rho}u(\Psi(e^{\rho+i\theta}))\Big|_{\rho\rightarrow\rho_0^+}.\label{eqn:normalderiv}
 \eeq
 A great advantage of using the coordinate system $(\rho,\theta)$ is that, thanks to $h_\rho=h_\theta$, the integration of the normal derivative for $u$ is simply
	\begin{equation}\label{eqn:boundaryintegral}
		\int_{\partial\Omega}\frac{\partial u}{\partial\nu}\Big|^+_{\p\Om}(z)\, d\sigma(z)=\int_{0}^{2\pi}\frac{\partial u}{\partial \rho}(\rho_0,\theta)\Big|_{\rho\rightarrow\rho_0^+}\, d\theta.
	\end{equation}

\subsection{Geometric density basis functions}
In this subsection, we set up two systems of density basis functions on $\p\Om$ whose usage will be clear in the subsequent sections.
 
 Define for each $m\in\ZZ$ the density functions
 \beq
 \begin{cases}
 \ds\teta_m(z)=\ds\teta_m(\Psi(e^{\rho_0+i\theta}))=e^{im\theta},\\
 \ds\tzeta_m(z)=\ds\tzeta_m(\Psi(e^{\rho_0+i\theta}))=\frac{e^{im\theta}}{h(\rho_0,\theta)}.
 \end{cases}
 \eeq
 We then normalize them (with respect to the norms that will be defined later) as
 \beq
 \begin{cases}
 \ds \eta_m(z)=\ds|m|^{-\frac{1}{2}}\teta_m(z),\\[2mm]
 \ds\zeta_m (z)= \ds|m|^{\frac{1}{2}}\;\tzeta_m(z),\quad m\neq 0.
 \end{cases}
 \eeq
 For $m=0$, we set $\zeta_0=\tzeta_0$ and $\eta_0=\teta_0=1$. 
  Due to Lemma \ref{lemma:regularity} in the appendix, we have $h(\rho_0,\theta),\frac{1}{h(\rho_0,\theta)}\in L^1([0,2\pi])$, and, hence,
 \beq\label{zetaL2}\tzeta_m(z),\ \teta_m(z),\ \zeta_m(z),\ \eta_m(z)\in L^2(\p\Om).\eeq
 
 In Figure \ref{fig:kite}, two geometric boundary basis functions $\teta_1$ and $\tzeta_1$ are drawn for a domain enclosed by a parametrized curve, where the corresponding conformal mapping is computed by using   
 Theorem \ref{thm:a_k}.

 Before defining new spaces on $\p\Om$, let us consider the Sobolev spaces $H^{\pm 1/2}$ on the $1$-dimensional torus $\TT^1=\RR^1/2\pi\ZZ^1$.
 We denote by $L^2(\TT^1)$ the space consisting of periodic functions $f$ on $\TT^1$ such that $$\|f\|^2_{L^2(\TT^1)}=\frac{1}{2\pi}\int_0^{2\pi}|f(\theta)|^2 d\theta<\infty.$$
 The space $L^2(\TT^1)$ can be identified with $l^2(\ZZ)$ via the Fourier basis. Similarly, the Sobolev space $H^{1/2}(\TT^1)$ admits the Fourier series characterization as follows:
 $$H^{1/2}(\TT^1)=\left\{\varphi =\sum_{m=-\infty}^\infty a_m e^{im\theta}\;\bigg|\;\|\varphi\|_{H^{1/2}(\TT^1)}^2=|a_0|^2+\sum_{m=-\infty,m\neq0}^\infty |m||a_m|^2<\infty\right\}.$$
 For each $l\in H^{-1/2}(\TT^1)=\left(H^{1/2}(\TT^1)\right)^*$, it satisfies
 $$\|l\|_{H^{-1/2}(\TT^1)}^2=|b_0|^2+\sum_{m=-\infty, m\neq0}^\infty |m|^{-1}|b_m|^2<\infty,\quad\mbox{where } b_m = l(e^{im\theta}).$$
 Conversely, for each sequence $(b_m)\in l^2(\ZZ)$ satisfying $|b_0|^2+\sum_{m=-\infty, m\neq 0}^\infty |m|^{-1}|b_m|^2<\infty$, there exists $l\in H^{-1/2}(\TT^1)$ such that $l(e^{im\theta})=b_m$ for each $m$. 
 We will define two separable Hilbert spaces on $\p\Om$ in a similar manner in the following subsection. 
 \vskip .5cm

 \begin{figure}[!b]
     \begin{center}
     \begin{subfigure}{.475\textwidth}
 	  	\centering
 	  	\includegraphics[height=4.5cm,width=6cm, trim={0 1cm 0 0.8cm}, clip]{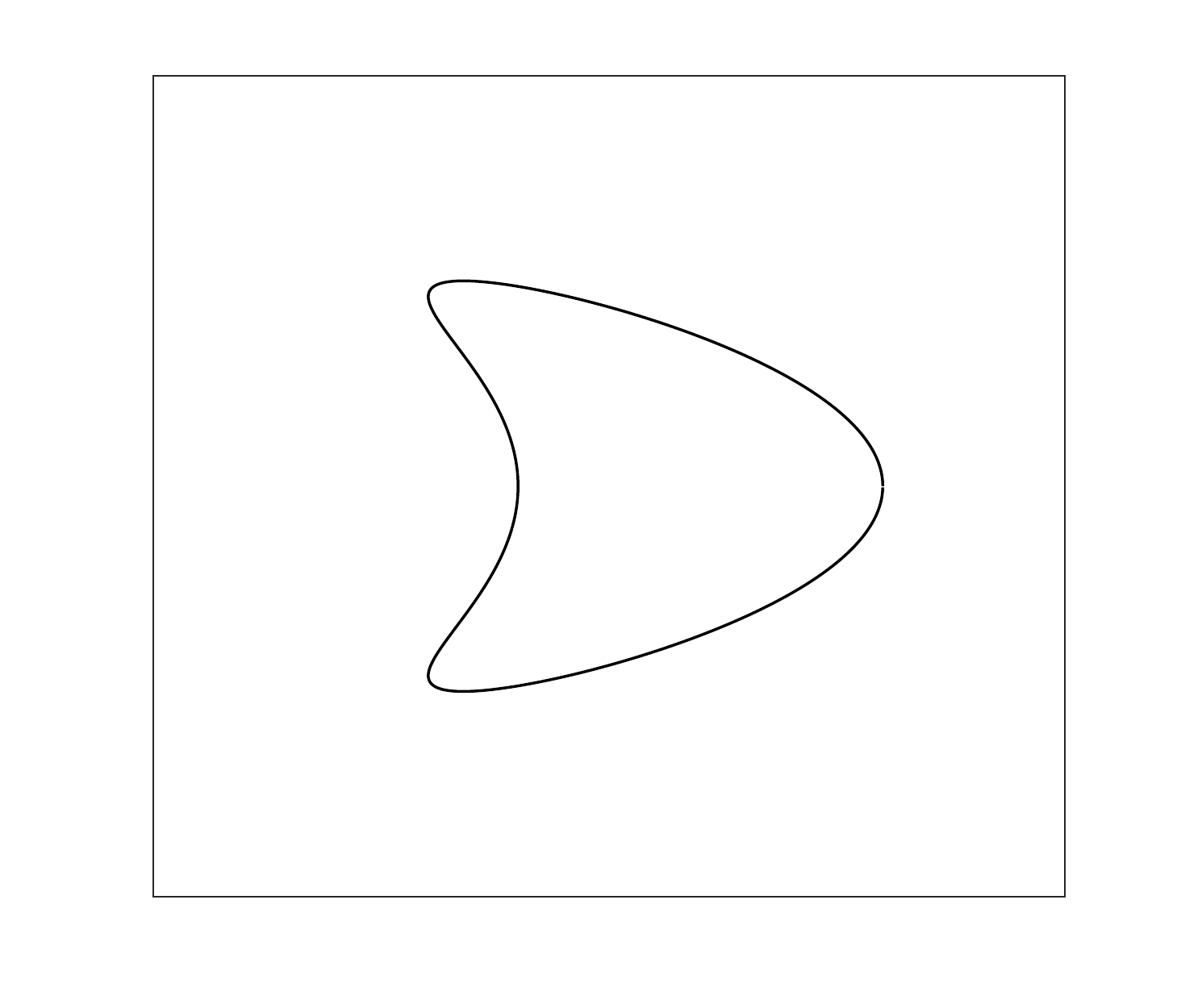}
 	  	\caption{Kite-shaped domain $\Om$}
 	\end{subfigure}
 	\hfill
 	\begin{subfigure}{.475\textwidth}
 	  	\centering
 	  	\includegraphics[height=4.5cm,width=6cm,  trim={0.5cm 1cm 0 0.8cm}, clip]{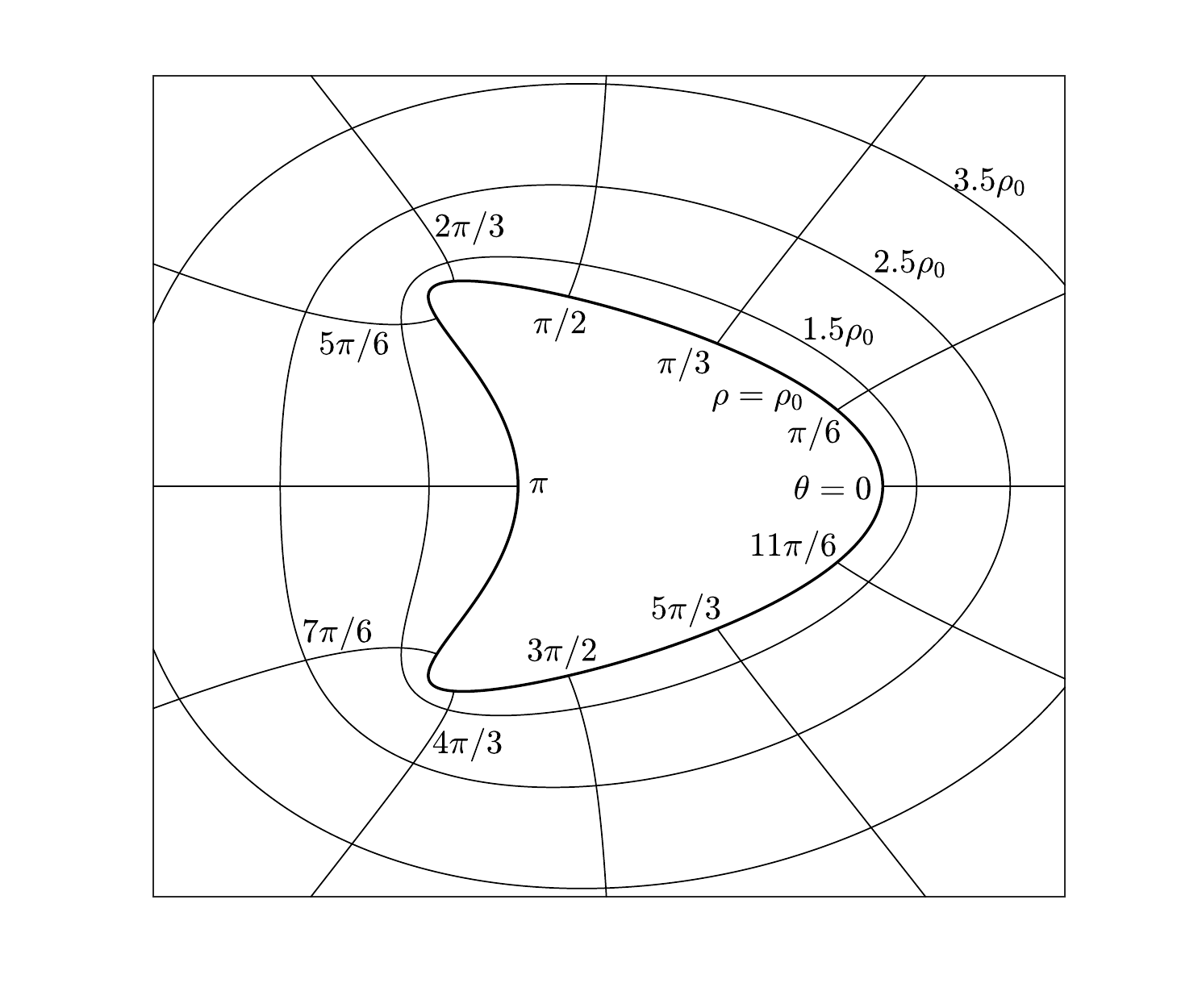}
 	  	\caption{Level coordinates curves of $\Psi(\rho,\theta)\}$}
 	\end{subfigure}
     \begin{subfigure}{.475\textwidth}
 		\centering
 	    \includegraphics[height=4.5cm,width=6cm,  trim={0.3cm 0.1cm 0.3cm 0.5cm}, clip]{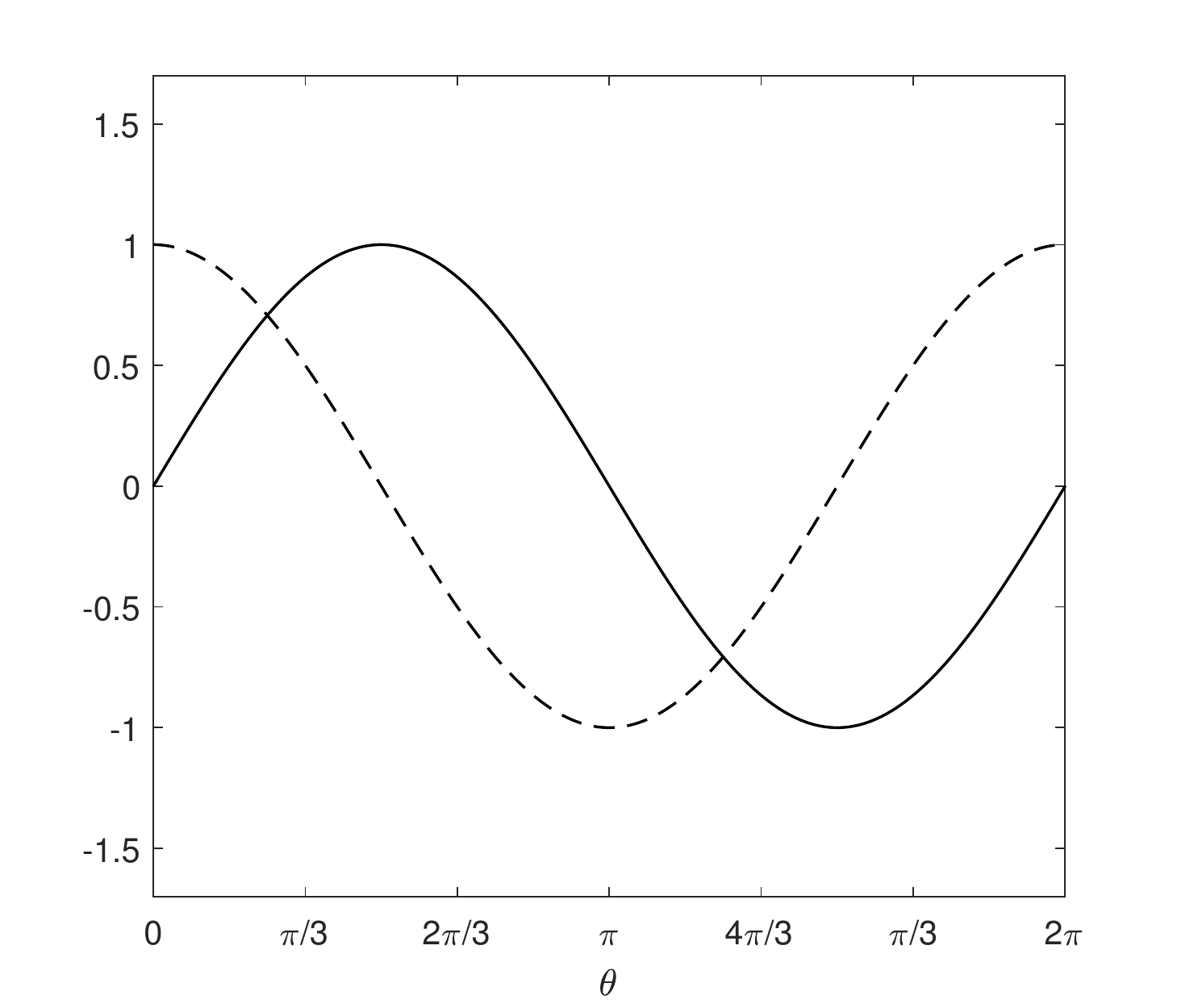}
 	    \caption{$\teta_1(\theta)=e^{i\theta}$}
 	\end{subfigure}%
 \hskip 0.5cm
 	\begin{subfigure}{.475\textwidth}
 	  	\centering
 	  	\includegraphics[height=4.5cm,width=6cm,  trim={0.3cm 0.1cm 0.3cm 0.5cm}, clip]{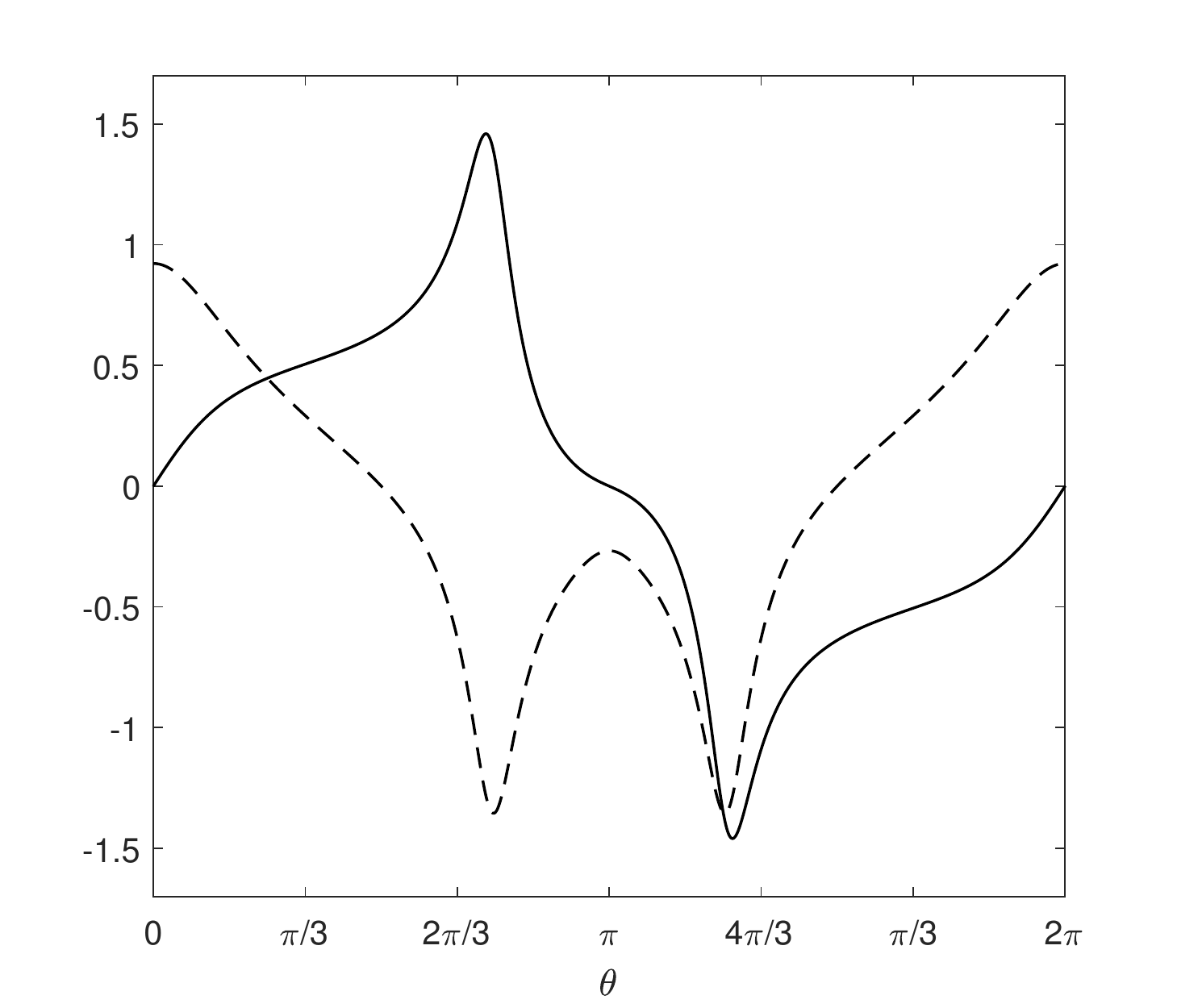}
 		 \caption{$\tzeta_1(\theta)=e^{i\theta}/h(\theta)$}
 	\end{subfigure}
 	\vskip -.5cm
    \caption{A general shaped domain $\Om$ and its geometric boundary basis functions. The domain $\Om$ is given by the parametrization $x(t) = \cos(t) + 0.65\cos(2t) - 0.65,\ y(t) = 1.5\sin(t)$, $t\in[0,2\pi]$. (a) illustrates $\p\Om$.
    (b) shows several level curves of curvilinear coordinates $(\rho,\theta)$ made by the exterior conformal mapping associated with $\Om$, where the coefficients of $\Psi$ are numerically computed by using Theorem \ref{thm:a_k}. 
      (c,d) show the real (dashed) and imaginary part (solid) of the basis functions.
 }\label{fig:kite}
 \end{center}
 \end{figure}
%

\subsection{Definition of the spaces $\Keta(\p\Om)$ and $\Kzeta(\p\Om)$}
For the sake of simplicity, we write $f(\theta)=(f\circ\Psi)(\rho_0,\theta)$ for a function $f$ defined on $\partial\Omega$.

Consider the vector space of functions
\begin{align}\label{def:Kzeta}
K^{-1/2}(\p\Om)&:=\left\{\varphi:\partial\Omega\rightarrow\mathbb{C}\;\Big| \;\ \sum_{m\in\ZZ}|a_m|^2<\infty,~a_m=\frac{1}{2\pi}\int_{\partial\Omega}\varphi\overline{\eta_{m}} \,d\sigma\right\}.
\end{align}
We shall consider two functions $\varphi_1,\varphi_2\in K^{-1/2}(\partial\Omega)$ equivalent if 
$$\frac{1}{2\pi}\int_{\partial\Omega}\varphi_1\overline{\eta_m }\, d\sigma=\frac{1}{2\pi}\int_{\partial\Omega}\varphi_2\overline{\eta_m }\, d\sigma\quad \text{for all } m\in\mathbb{Z}.$$
We do not distinguish between equivalent functions in $K^{-1/2}(\partial\Omega)$. Among all functions in the equivalence class, denoted by $[\varphi]$, containing a given element $\varphi\in K^{-1/2}(\partial\Omega)$, we take the series expansion with respect to the basis $\{\zeta_m\}$ as the representative of the class $[\varphi]$. In other words, we write
$$\varphi=\sum_{m\in\mathbb{Z}}a_m\zeta_m \quad\mbox{with} \quad a_m=\frac{1}{2\pi}\int_{\partial\Omega}\varphi\overline{\eta_{m}}\, d\sigma.$$
Then one can define the inner product and the associated norm in $K^{-1/2}(\p\Om)$ in terms of the Fourier coefficients with respect to the basis $\{\zeta_m\}.$ In the same way we define $K^{1/2}(\p\Om)$, by exchanging the role of $\{\zeta_m\}$ and $\{\eta_m\}$, as
\begin{align}\label{def:Keta}
K^{1/2}(\p\Om)&:=\left\{\psi:\partial\Omega\rightarrow\mathbb{C}\;\Big| \;\ \sum_{m\in\ZZ}|b_m|^2<\infty,~b_m=\frac{1}{2\pi}\int_{\partial\Omega}\varphi\overline{\zeta_{m}} \,d\sigma\right\}.
\end{align}
For any $\psi\in K^{1/2}(\p\Om)$, we can write
$$\psi= \sum_{m\in\ZZ}b_m \eta_m\quad \mbox{with}\quad b_m = \frac{1}{2\pi}\int_{\partial\Omega}\varphi\overline{\zeta_{m}}\, d\sigma.$$
We identify the two spaces $K^{-1/2}(\p\Om)$ and $K^{1/2}(\p\Om)$ with $l^2(\CC)$ and define the inner-products via the boundary bases $\{\zeta_m\}$ and $\{\eta_m\}$, respectively. The discussion can be summarized as follows. 
\begin{definition}\label{definition:twospaces}
We define two Hilbert spaces $K^{-1/2}(\p\Om)$ and $K^{1/2}(\p\Om)$ by \eqnref{def:Kzeta} and \eqnref{def:Keta} (quotiented by the equivalence class of the zero function) such that they are isomorphic to $l^2(\CC)$ via the boundary bases $\{\zeta_m\}$ and $\{\eta_m\}$, respectively.
In other words, they are
\begin{align*}
K^{-1/2}(\p\Om)&=\left\{\varphi=\sum_{m\in\ZZ} a_m \zeta_m\;\Big|\;\ \sum_{m\in\ZZ} |a_m|^2<\infty\right\},\\
K^{1/2}(\p\Om)&=\left\{\psi=\sum_{m\in\ZZ}b_m \eta_m\;\Big| \;\  \sum_{m\in\ZZ} |b_m|^2<\infty\right\}
\end{align*}
equipped with the inner products
\begin{align}
&\Big(\sum c_m\zeta_m,\ \sum d_m \zeta_m\Big)_{-1/2}=\sum c_m \overline{d_m},\\\label{innerproduct_zeta}
&\Big(\sum c_m\eta_m,\ \sum d_m \eta_m\Big)_{1/2}=\sum c_m \overline{d_m}.
\end{align}
For the sake of notational convenience we may simply write $\Kzeta$ and $\Keta$ for the two spaces.
\end{definition}

Let us consider the operator $$I(\varphi,\psi)=\frac{1}{2\pi}\int_{\partial\Omega} \varphi(z)\overline{\psi(z)}\,d\sigma(z)\quad \mbox{for }\varphi\in K^{-1/2},\  \psi\in\Keta.$$  
For any finite combinations of basis functions it holds that
$I\left(\sum_{|m|\leq N}a_m\zeta_m, \sum_{|m|\leq N}b_m\eta_m\right)=\sum_{|m|\leq N} a_m \overline{b_m},$
and hence we have $|I(\varphi,\psi)|\leq \|\varphi\|_{\Kzeta}\|\psi\|_{\Keta}<\infty$. 
We define a duality pairing between $K^{-1/2}(\partial\Omega)$ and $K^{1/2}(\partial \Omega)$, which is clearly the extension of the $L^2$ pairing:
$$(\varphi,\psi)_{-1/2,1/2}=\sum_{m=-\infty}^\infty a_m \overline{b_m}$$
for $\varphi=\sum a_m\zeta_m\in \Kzeta(\p\Om)$ and $\psi=\sum b_m\eta_m\in \Keta(\p\Om)$. Clearly, the pair of indexed families of functions $\{\zeta_m\}$ and $\{\eta_m\}$ is a complete biorthogonal system for $K^{1/2}(\partial\Omega)$ and $K^{-1/2}(\partial\Omega)$.

If the boundary $\partial\Omega$ is smooth enough, the space $K^{\pm1/2}(\partial\Omega)$ coincides with the classical trace spaces $H^{\pm1/2}(\partial\Omega)$. \begin{lemma}\label{smooth:equi}
Let $\Om$ be a simply connected bounded domain with $C^{1,\alpha}$ boundary with some $\alpha>0$. Then the following relations hold:
\begin{align*}
K^{1/2}(\p\Om)&=H^{1/2}(\p\Om),\\
K^{-1/2}(\p\Om)&=H^{-1/2}(\p\Om).
\end{align*}
The norm $\|\cdot\|_{\Keta(\p\Om)}$ is equivalent to $\|\cdot\|_{H^{1/2}(\p\Om)}$ and the norm $\|\cdot\|_{\Kzeta(\p\Om)}$
to $\|\cdot\|_{H^{-1/2}(\p\Om)}$.
Moreover, the two duality pairings $(\cdot,\cdot)_{-1/2,1/2}$ and $\la\cdot,\cdot\ra_{-1/2,1/2}$ coincide.
\end{lemma}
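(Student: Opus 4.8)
The plan is to transport everything to the unit torus $\TT^1$ via the conformal boundary parametrization $\theta\mapsto\Psi(e^{\rho_0+i\theta})$ and to exploit the Fourier characterizations of $H^{\pm1/2}(\TT^1)$ recalled above. For $f$ on $\p\Om$ write $\tilde f(\theta)=(f\circ\Psi)(\rho_0,\theta)$ for its pullback. The first, purely formal, step identifies $\Keta$ isometrically with $H^{1/2}(\TT^1)$: if $\psi=\sum_m b_m\eta_m$, then $\tilde\psi(\theta)=b_0+\sum_{m\neq0}b_m|m|^{-1/2}e^{im\theta}$, so its $m$-th Fourier coefficient is $b_m|m|^{-1/2}$ for $m\neq0$ and $b_0$ for $m=0$; inserting this into the Fourier formula for $\|\cdot\|_{H^{1/2}(\TT^1)}$ gives $\|\tilde\psi\|_{H^{1/2}(\TT^1)}^2=|b_0|^2+\sum_{m\neq0}|m|\cdot|m|^{-1}|b_m|^2=\sum_m|b_m|^2=\|\psi\|_{\Keta}^2$. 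An entirely analogous computation, now using $d\sigma=h\,d\theta$ so that $a_m=|m|^{-1/2}\widehat{(\tilde\varphi h)}(m)$ for $\varphi=\sum_m a_m\zeta_m$, shows that $\varphi\mapsto\widetilde{\varphi}\,h$ (pullback times the scale factor) is an isometry of $\Kzeta$ onto $H^{-1/2}(\TT^1)$.

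The analytic content is to compare the \emph{intrinsic} trace norms on $\p\Om$ with these torus norms, and here the regularity of $\Psi$ enters. Since $\p\Om$ is $C^{1,\alpha}$, the Kellogg--Warschawski theorem gives a $C^{1,\alpha}$ extension of $\Psi$ to $\{|w|=\gamma\}$ with non-vanishing derivative; consequently the scale factor $h(\rho_0,\theta)=e^{\rho_0}|\Psi'(e^{\rho_0+i\theta})|$ is bounded above and below by positive constants (cf.\ Lemma \ref{lemma:regularity}), and the parametrization is a bi-Lipschitz $C^{1,\alpha}$ homeomorphism onto the chord-arc Jordan curve $\p\Om$. I would characterize $H^{1/2}(\p\Om)$ by the Slobodeckij seminorm $\iint_{\p\Om\times\p\Om}|f(x)-f(y)|^2|x-y|^{-2}\,d\sigma(x)\,d\sigma(y)$ together with the $L^2(\p\Om)$ norm. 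Using $|x-y|\asymp|\theta-\phi|$ for the two boundary points of parameters $\theta,\phi$ (the chord-arc estimate, with $|\theta-\phi|$ the torus distance) and $d\sigma\asymp d\theta$, the double integral is comparable to the corresponding one on $\TT^1$, and the $L^2$ norms are comparable as well. Hence $\|f\|_{H^{1/2}(\p\Om)}\asymp\|\tilde f\|_{H^{1/2}(\TT^1)}$, which, combined with the isometry of the previous step, yields $\Keta=H^{1/2}(\p\Om)$ with equivalent norms.

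For the dual pair I would argue by duality rather than repeat the Slobodeckij estimate. First, $I$ is biorthogonal on the bases, $I(\zeta_m,\eta_n)=\delta_{mn}$, so $(\cdot,\cdot)_{-1/2,1/2}$ realizes $\Kzeta$ as the dual of $\Keta$. On smooth densities $I(\varphi,\psi)=\frac1{2\pi}\int_{\p\Om}\varphi\overline{\psi}\,d\sigma$ reduces, after the pullback, to $\frac1{2\pi}\int_0^{2\pi}(\tilde\varphi h)\overline{\tilde\psi}\,d\theta$, i.e.\ the standard $H^{-1/2}(\TT^1)$--$H^{1/2}(\TT^1)$ pairing between $\widetilde\varphi\,h$ and $\tilde\psi$. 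Since $\Keta=H^{1/2}(\p\Om)$ with equivalent norms, their duals coincide with equivalent norms; the computation of the first paragraph shows that both $\Kzeta$ and $H^{-1/2}(\p\Om)$ are carried by the \emph{same} map $\varphi\mapsto\widetilde\varphi\,h$ onto $H^{-1/2}(\TT^1)$, which forces $\Kzeta=H^{-1/2}(\p\Om)$ with equivalent norms. Finally, both $(\cdot,\cdot)_{-1/2,1/2}$ and $\la\cdot,\cdot\ra_{-1/2,1/2}$ reduce on smooth densities to the $L^2(\p\Om)$ pairing (with the fixed normalization used throughout), so by the density of smooth functions they coincide on all of $\Kzeta\times\Keta$.

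I expect the main obstacle to be the reparametrization step of the second paragraph: proving that the conformal pullback preserves the $H^{1/2}$ norm up to equivalence. The borderline exponent $s=1/2$ is precisely the one for which the Slobodeckij seminorm is bi-Lipschitz stable, so the argument hinges on two geometric facts extracted from boundary regularity---that $h$ stays bounded away from $0$ and $\infty$ (so $d\sigma\asymp d\theta$ and no cusp degrades the lower bound), and that $\p\Om$ is chord-arc so that $|x-y|\asymp|\theta-\phi|$ \emph{globally}, not merely for nearby parameters. Both follow from the $C^{1,\alpha}$ hypothesis via Kellogg--Warschawski, but they are where the smoothness assumption is genuinely used, and I would isolate them as the two supporting lemmas on which the equivalence rests.
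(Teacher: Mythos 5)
Your proof is correct and follows essentially the same route as the paper's: both transport the problem to $\TT^1$ via the boundary parametrization $\theta\mapsto\Psi(e^{\rho_0+i\theta})$, compare the Sobolev--Slobodeckij norm on $\p\Om$ with the one on $\TT^1$ using the boundedness of $h$ and $1/h$ (the paper cites Kellogg--Warschawski exactly as you do), and then conclude via the Fourier-coefficient characterizations of $H^{\pm1/2}(\TT^1)$. The only difference is one of detail: you make explicit the chord-arc estimate $|x-y|\asymp|\theta-\phi|$, the isometries $\psi\mapsto\tilde\psi$ and $\varphi\mapsto\tilde\varphi\,h$, and the duality step for $K^{-1/2}$, all of which the paper's two-line proof leaves implicit.
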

\begin{proof}
For a general domain $D$, the space $H^{1/2}(\partial D)$ can be characterized as the Hilbert space of functions $u:\partial D\rightarrow\mathbb{C}$ equipped with the fractional Sobolev-Slobodeckij norm
$$\|u\|_{H^{1/2}(\partial D)}^2= \|u\|_{L^2(\partial D)}^2+\int_{\partial D}\int_{\partial D}\frac{|u(z)-u(\tilde{z})|^2}{|z-\tilde{z}|^2 }\, d\sigma(z)d\sigma(\tilde{z})<\infty.$$
Since $h$ and $1/h$ are non-vanishing continuous on $\partial\Omega$, we deduce that $u\in H^{1/2}(\partial\Omega)$ if and only if $(u\circ \Psi)(\rho_0,\cdot)\in H^{\frac{1}{2}}(\mathbb{T}^1)$ and 
 $\|u\|_{H^{1/2}(\partial\Omega)}\sim \|(u\circ\Psi)(\rho_0,\cdot)\|_{H^{1/2}(\mathbb{T}^1)}.$
Therefore, we prove the lemma by considering the Fourier coefficients characterizations of $H^{1/2}(\mathbb{T}^1)$. 
\end{proof}

\section{Boundary integral operators in terms of geometric basis}
In this section we derive the series expansions in terms of harmonic basis functions for the boundary integral operators related to the integral formulation for the transmission problem. We then apply the results to obtain an explicit formula for the exterior conformal mapping coefficients.

\subsection{Main results}
 We set $\SingleOmega[\varphi](z)=\SingleOmega[\varphi](x)$ for $x=(x_1,x_2)$ and $z=x_1+ix_2$, and other integral operators are defined in the same way.
Here we present our main results. The proof is at the end of this subsection.

\begin{theorem}[Series expansion for the boundary integral operators]\label{thm:series}
Assume that $\Om$ is a simply connected bounded domain in $\RR^2$ enclosed by a piecewise $C^{1,\alpha}$ Jordan curve, possibly with a finite number of corner points without inward or outward cusps.
Let $F_m $ be the $m$-th Faber polynomial of $\Omega$, $c_{i,j}$ be the Grunsky coefficients and $z=\Psi(w)=\Psi(e^{\rho+i\theta})$ for $\rho>\rho_0=\ln \gamma$. 
\begin{itemize}
\item[\rm(a)]
We have (for $m=0$)
\beq\label{Scal_zeta0}
\Scal_{\p\Om}[\tzeta_0](z)=
\begin{cases}
\ln \gamma \quad &\mbox{if }z\in\overline{\Om},\\
\ln|w|\quad&\mbox{if }z\in\CC\setminus\overline{\Om}.
\end{cases}
\eeq 
For $m=1,2,\dots$, we have
	\begin{align}\label{eqn:seriesSLpositive}
		\SingleOmega[\tzeta_m](z)&=
		\begin{cases}
			\ds-\frac{1}{2m\gamma^m}F_m(z)\quad&\text{for }z\in\overline{\Omega},\\[2mm]
			\ds-\frac{1}{2m\gamma^m}\left(\sum_{k=1}^{\infty}c_{m,k}e^{-k(\rho+i\theta)}+\gamma^{2m}e^{m(-\rho+i\theta)}\right)\quad &\text{for } z\in\CC\setminus\overline{\Om},
		\end{cases}\\[3mm]
	\label{eqn:seriesSLnegative}
		\SingleOmega[\tzeta_{-m}](z)&=
		\begin{cases}
		\ds	-\frac{1}{2m\gamma^{m}}\overline{F_{m}(z)}\quad&\text{for }z\in\overline{\Omega},\\[2mm]
		\ds-\frac{1}{2m\gamma^m}\left(\sum_{k=1}^{\infty}\overline{c_{m,k}}e^{-k(\rho-i\theta)}+\gamma^{2m}e^{m(-\rho-i\theta)}\right)\quad &\text{for } z\in\CC\setminus\overline{\Om}.		
		\end{cases}
	\end{align}
	The series converges uniformly for all $(\rho,\theta)$ such that $\rho\geq\rho_1>\rho_0$.

\item[\rm(b)]
We have (for $m=0$)
\begin{align}\label{eqn:seriesDLzero}
	\ds	\DoubleOmega [1](z)&=
		\begin{cases}
		\ds	1&\text{for }z\in\Omega,\\
		\ds	0&\text{for } z\in\CC\setminus\overline{\Om}.
		\end{cases}
	\end{align}
For $m=1,2,\dots$, we have
\begin{align}\label{eqn:seriesDLpositive}
			\ds	\DoubleOmega [\teta_m](z)
	&=
		\begin{cases}
		\ds	\frac{1}{2\gamma^m}F_m(z)\quad&\text{for }z\in{\Omega},\\[2mm]
		\ds	\frac{1}{2\gamma^m}\left(\sum_{k=1}^{\infty}c_{m,k}e^{-k(\rho+i\theta)}-\gamma^{2m}e^{m(-\rho+i\theta)}\right) \quad&\text{for } z\in\CC\setminus\overline{\Om},
		\end{cases}\\[3mm]
\label{eqn:seriesDLnegative}
	\ds	\DoubleOmega [\teta_{-m}](z)&=
		\begin{cases}
		\ds	\frac{1}{2\gamma^m}\overline{F_m(z)}\quad&\text{for }z\in{\Omega},\\[2mm]
		\ds	\frac{1}{2\gamma^m}\left(\sum_{k=1}^{\infty}\overline{c_{m,k}}e^{-k(\rho-i\theta)}-\gamma^{2m}e^{m(-\rho-i\theta)}\right) \quad&\text{for } z\in\CC\setminus\overline{\Om}.
		\end{cases}
	\end{align}
	The series converges uniformly for all $(\rho,\theta)$ such that $\rho\geq\rho_1>\rho_0$.

\item[\rm(c)]
 We have (for $m=0$)
	\beq\label{eqn:Kcal:zeta0}
	\Kcal^*_{\p\Om}[\zeta_0]=\frac{1}{2}\zeta_0,
	\quad
		\KOmega [1]=\frac{1}{2}.
	\eeq
For $m =1,2,\cdots$
	\begin{align}\label{NP_series1}
		&\KstarOmega[{\zeta_m}](\theta)=\frac{1}{2}\sum_{k=1}^{\infty}\frac{\sqrt{m}}{\sqrt{k}}\frac{c_{k,m}}{\gamma^{m+k}}\, {\zeta}_{-k}(\theta),\quad
		\KstarOmega[{\zeta}_{-m}](\theta)=\frac{1}{2}\sum_{k=1}^{\infty}\frac{\sqrt{m}}{\sqrt{k}}\frac{\overline{c_{k,m}}}{\gamma^{m+k}}\, \zeta_{k}(\theta),\\
\label{NP_series2}
		&\KOmega [\eta_m](\theta)=\frac{1}{2}\sum_{k=1}^{\infty}\frac{\sqrt{k}}{\sqrt{m}}\frac{c_{m,k}}{\gamma^{m+k}}\eta_{-k}(\theta),
		\quad
		\KOmega [\eta_{-m}](\theta)=\frac{1}{2}\sum_{k=1}^{\infty}\frac{\sqrt{k}}{\sqrt{m}}\frac{\overline{c_{m,k}}}{\gamma^{m+k}}\eta_{k}(\theta).
	\end{align}
 The infinite series converges either in $\Keta(\p\Om)$ or in $\Kzeta(\p\Om)$. 
	\end{itemize}
\end{theorem}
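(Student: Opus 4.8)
The plan is to reduce the whole theorem to the single and double layer potentials, whose series I can read off directly from the Faber/Grunsky expansions, and then to obtain the NP operators from the jump relations \eqnref{eqn:Kstarjump} rather than from the principal value. The structural reason the statement is clean is that the factors $h$ and $1/h$ in $\teta_m$ and $\tzeta_m$ are designed to absorb the length element $d\sigma=h\,d\theta$ and the conformal normal derivative $\p_\nu=h^{-1}\p_\rho$: writing $z'=\Psi(e^{\rho_0+i\theta'})$ on $\p\Om$ one gets
\[
\SingleOmega[\tzeta_m](z)=\frac{1}{2\pi}\int_0^{2\pi}\ln|z-\Psi(e^{\rho_0+i\theta'})|\,e^{im\theta'}\,d\theta',
\]
and likewise $\DoubleOmega[\teta_m](z)=\frac{1}{2\pi}\int_0^{2\pi}\p_{\rho'}\!\big[\ln|z-\Psi(e^{\rho'+i\theta'})|\big]\big|_{\rho'=\rho_0}e^{im\theta'}\,d\theta'$, so in both cases the kernel is integrated against a single Fourier mode $e^{im\theta'}$.

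For part (a) with $z\in\overline\Om$ I would insert the log decomposition \eqnref{eqn:log_decomp} (with $w=e^{\rho_0+i\theta'}$, legitimate up to $|w|=\gamma$ by its stated uniform convergence on $|w|\ge\gamma$ for $z\in\Om_\gamma=\Om$), take real parts, and use $\frac{1}{2\pi}\int_0^{2\pi}e^{-ik\theta'}e^{im\theta'}\,d\theta'=\delta_{k,m}$ to isolate the one surviving term $-\frac{1}{2m\gamma^m}F_m(z)$. For $z=\Psi(w)\in\CC\setminus\overline\Om$, $|w|>\gamma$, I would instead expand $\log(\Psi(w)-\Psi(w'))$ by applying \eqnref{eqn:log_decomp} to the \emph{inner} point $z'=\Psi(w')$ (valid for $\gamma\le|w'|<r<|w|$) and then substituting the Grunsky expansion \eqnref{eqn:Faberdefinition} of $F_{m'}(\Psi(w'))$. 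Integrating the resulting double series against $e^{im\theta'}$ picks out, from the $(w')^{-k}$ part, $\sum_{m'}\frac1{m'}c_{m',m}\gamma^{-m}w^{-m'}$ and, from the conjugate $(w')^{m'}$ part, the single term producing $\gamma^{2m}e^{m(-\rho+i\theta)}$. Rewriting the first via the Grunsky identity \eqnref{eqn:Grunskyidentity}, $\frac1{m'}c_{m',m}=\frac1m c_{m,m'}$, collapses it to $\frac1{m\gamma^m}\sum_k c_{m,k}w^{-k}$ and yields exactly \eqnref{eqn:seriesSLpositive}. Part (b) is identical in spirit: differentiating the same expansions in $\rho'$ before integrating turns $\log w'$ into the constant responsible for $\DoubleOmega[1]=\chi_\Om$, replaces the $\tfrac1k F_k$ coefficients by $F_k$ in the interior (giving $\frac1{2\gamma^m}F_m$), and in the exterior flips the sign of the $\gamma^{2m}$ term, since $\p_{\rho'}(w')^{m'}=m'(w')^{m'}$ carries the opposite sign to $\p_{\rho'}(w')^{-k}$; the Grunsky identity again matches \eqnref{eqn:seriesDLpositive}. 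Throughout, the negative-index formulas come for free from $\tzeta_{-m}=\overline{\tzeta_m}$, $\teta_{-m}=\overline{\teta_m}$ together with the reality of the kernels.

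For part (c) I would read $\KstarOmega$ and $\KOmega$ off the jump relations. From $\p_\nu\SingleOmega[\tzeta_m]|^+=\tfrac12\tzeta_m+\KstarOmega[\tzeta_m]$ I get $\KstarOmega[\tzeta_m]=h^{-1}\p_\rho\SingleOmega[\tzeta_m]|_{\rho_0^+}-\tfrac12\tzeta_m$ and substitute the exterior series of part (a). Differentiating in $\rho$ brings down the factors $-k$ and $-m$; at $\rho=\rho_0$ the $e^{im\theta}$ term equals exactly $\tfrac12\tzeta_m$ and cancels the jump term, while the remainder is $h^{-1}\frac{1}{2m\gamma^m}\sum_k k\,c_{m,k}\gamma^{-k}e^{-ik\theta}=\frac1{2m}\sum_k\frac{k\,c_{m,k}}{\gamma^{m+k}}\tzeta_{-k}$ upon recognizing $e^{-ik\theta}/h=\tzeta_{-k}$. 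Passing to the normalized basis via $\tzeta_m=|m|^{-1/2}\zeta_m$ and applying \eqnref{eqn:Grunskyidentity} once more turns the coefficient into $\tfrac12\frac{\sqrt m}{\sqrt k}\frac{c_{k,m}}{\gamma^{m+k}}$, which is \eqnref{NP_series1}; the formulas \eqnref{NP_series2} for $\KOmega[\eta_m]$ follow the same way from the double layer jump $\DoubleOmega[\varphi]|^\pm=(\mp\tfrac12 I+\KOmega)[\varphi]$ and part (b), and the cases $m=0$ reduce to $\SingleOmega[\tzeta_0]$ being constant inside and equal to $\ln|w|=\rho$ outside. Convergence of these series in $\Kzeta(\p\Om)$ (resp. $\Keta(\p\Om)$) is then immediate from the Grunsky bound \eqnref{eqn:GrunskyBounds}, which says precisely that the coefficient sequence has $\ell^2$-norm at most $\tfrac12$ under the isometry with $l^2(\CC)$.

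I expect the main obstacle to be the exterior expansion and the justification of the term-by-term operations at the boundary circle $|w'|=\gamma$. The delicate points are: (i) checking that \eqnref{eqn:log_decomp} may be applied with the source $z'=\Psi(w')$ ranging over $\p\Om$ while $z$ is held fixed outside, which forces the restriction $\gamma\le|w'|<|w|$ and must be combined with the uniform convergence statement in order to differentiate under the integral; and (ii) legitimizing term-by-term $\rho$-differentiation of the exterior series up to the boundary, where pointwise uniform convergence degrades — here one interprets the boundary identities in $\Kzeta$/$\Keta$ and leans on \eqnref{eqn:GrunskyBounds} and Parseval rather than on uniform convergence. By contrast the algebraic core is routine once set up: every matching of a computed coefficient to the stated one is a single application of the Grunsky identity \eqnref{eqn:Grunskyidentity}.
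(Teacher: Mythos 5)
Your proposal is correct in substance, and it reaches the theorem by a genuinely different route from the paper for parts (a) and (b). The paper does not expand the kernel at all: it \emph{guesses} a closed-form solution and invokes uniqueness of the transmission problems \eqnref{eqn:SLtransmission} and \eqnref{eqn:DLtransmission} that characterize $\SingleOmega$ and $\DoubleOmega$. Concretely, it sets $u=F_m(z)$ in $\Om$ and $u=F_m(z)-w^m+\gamma^{2m}\overline{w^{-m}}$ outside, checks continuity across $\p\Om$ using $\overline{w}=\gamma^2/w$ on $|w|=\gamma$, reads off the flux jump $-2m\gamma^m\tzeta_m$, and only then converts the closed form \eqnref{SL:simple} into the series via \eqnref{eqn:Faberdefinition}; the double layer is handled identically with the sign of $\gamma^{2m}\overline{w^{-m}}$ flipped so the roles of the trace and flux jumps exchange. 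Your direct computation --- inserting \eqnref{eqn:log_decomp}, integrating against the single Fourier mode, and collapsing the double sum with the Grunsky identity \eqnref{eqn:Grunskyidentity} --- yields the same formulas and is more mechanical (no ansatz to divine), but it is exactly where the delicate interchanges you flag in point (i) live: the paper's ansatz-plus-uniqueness argument sidesteps all term-by-term integration on the critical circle $|w'|=\gamma$, confining the hard analysis to part (c). Your interchange can be legitimized as you suggest, via geometric decay in $m'$ for fixed exterior $w$ together with $L^2(\TT^1)$ convergence of the boundary Faber series from \eqnref{eqn:GrunskyBounds}, so this is a fixable delicacy rather than a gap. In part (c) the two arguments are essentially the same jump-relation computation, with one noteworthy difference: you differentiate the \emph{exterior} series at $\rho\to\rho_0^+$ (where, as you note, pointwise convergence may fail, especially at corners), whereas the paper takes the \emph{interior} trace, so that the normal derivative is that of the polynomial $F_m$; this makes the dominated-convergence justification immediate from the bound \eqnref{eqn:uniform:bdd} and the integrability of $h$ in Lemma \ref{lemma:regularity}, and the Fourier coefficients against the biorthogonal family $\{\eta_{-n}\}$ are then computed exactly as in your Parseval fallback. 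If you want the cleanest repair of your step (ii), replace the exterior-trace cancellation by the identity $\KstarOmega[\tzeta_m]=\p_\nu\SingleOmega[\tzeta_m]\big|^-+\frac{1}{2}\tzeta_m$ and proceed from the polynomial side as the paper does; the resulting coefficient $\frac{1}{2}\frac{\sqrt{k}}{\sqrt{m}}\frac{c_{m,k}}{\gamma^{m+k}}$ matches \eqnref{NP_series1} after one application of \eqnref{eqn:Grunskyidentity}, exactly as in your computation.
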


The coefficients in the equations \eqnref{NP_series1} and \eqnref{NP_series2} are symmetric due to the Grunsky identity \eqnref{eqn:Grunskyidentity}. In other words, the double indexed coefficient 
\begin{equation}\label{def:mu}
	{\mu}_{k,m}=\sqrt{\frac{m}{k}} \frac{c_{k,m}}{\gamma^{m+k}},\quad k,m\geq1,	
\end{equation}
satisfies \beq\label{eqn:mu:symm}
{\mu}_{k,m}={\mu}_{m,k}\quad\mbox{for all }m,k\geq1.
\eeq
The bound \eqnref{eqn:GrunskyBounds} implies that
\beq \label{seq:inequality}
\sum_{k=1}^\infty| {\mu}_{m,k}|^2=\sum_{k=1}^\infty| {\mu}_{k,m}|^2\leq1\quad\mbox{for all }m\geq1.
\eeq
Using the modified Grunsky coefficients \eqnref{def:mu}, the formulas \eqnref{NP_series1} and \eqnref{NP_series2} become simpler: for $m =1,2,\cdots$,
	\begin{align}\label{NP_series3}
		&\KstarOmega[{\zeta_m}](\theta)=\frac{1}{2}\sum_{k=1}^{\infty}\mu_{k,m}\, {\zeta}_{-k}(\theta),\quad
		\KstarOmega[{\zeta}_{-m}](\theta)=\frac{1}{2}\sum_{k=1}^{\infty}\overline{\mu_{k,m}}\, \zeta_{k}(\theta),\\
\label{NP_series4}
		&\KOmega [\eta_m](\theta)=\frac{1}{2}\sum_{k=1}^{\infty}\mu_{m,k}\eta_{-k}(\theta),
		\quad
		\KOmega [\eta_{-m}](\theta)=\frac{1}{2}\sum_{k=1}^{\infty}\overline{\mu_{m,k}}\eta_{k}(\theta).
	\end{align}
It follows directly that $\KstarOmega$ is self-adjoint on $\Kzeta(\partial\Omega)$ (and $\KOmega$ on $\Keta(\partial\Omega)$) thanks to \eqnref{eqn:mu:symm}.

We may identify each $\varphi=\sum_{m\in\mathbb{Z}} a_m\zeta_m\in \Kzeta(\partial\Omega)$ with $(a_m)\in l^2(\mathbb{Z})$ and the operator $\KstarOmega:\Kzeta(\p\Om)\rightarrow\Kzeta(\p\Om)$ with the bounded linear operator $\left[\Kcal^*_{\p\Om}\right]:l^2(\mathbb{Z})\rightarrow l^2(\mathbb{Z})$. 
Using \eqnref{seq:inequality}, it is easy to see that
\beq\label{K_Kmat}
\big\|\Kcal^*_{\p\Om}\big\|_{\Kzeta\rightarrow\Kzeta}=\big\|[\KstarOmega]\big\|_{l^2\rightarrow l^2}\leq\frac{1}{2}.
\eeq
The matrix corresponding to $\KstarOmega:\Kzeta(\partial\Omega)\rightarrow \Kzeta(\partial\Omega)$ via the basis set $\{\zeta_m\}_{m\in\mathbb{Z}}$ (or equivalently the matrix of $\left[\Kcal^*_{\p\Om}\right]:l^2(\mathbb{Z})\rightarrow l^2(\mathbb{Z})$) 
is a self-adjoint, doubly infinite matrix given by \eqnref{eqn:matrixKstar}.
In the same way, we can identify $\Kcal_{\p\Om}:\Keta(\p\Om)\rightarrow \Keta(\p\Om)$ with the operator 
$[\Kcal_{\p\Om}]:l^2(\CC)\rightarrow l^2(\CC)$. 
Hence we have the following:
\begin{equation}\label{eqn:KstarKmatequality}
[\Kcal_{\p\Om}]=[\Kcal_{\p\Om}^*].	
\end{equation}

Since $\Kcal^*_{\p\Om}$ is self-adjoint on $K^{-1/2}(\p\Om)$, the spectrum of $\Kcal_{\p\Om}^*$ on $K^{-1/2}(\p\Om)$ lies in $[-1/2,1/2]$ from \eqnref{K_Kmat}. For a $C^{1,\alpha}$ domain, it holds that $H^{-1/2}(\p\Om)=K^{-1/2}(\p\Om)$ and, hence, the spectrum of $\Kcal^*_{\p\Om}$ on $\mathcal{H}^*$ and $\Kcal^*_{\p\Om}$ on $\Kzeta(\partial\Omega)$ coincide. 

Therefore, the result is in accordance with the fact that the spectrum of $\Kcal_{\p\Om}^*$ on $\mathcal{H}^*$ lies in $(- 1/2 , 1/2)$ \cite{Kellogg:1929:FPT}.

\smallskip

\noindent{\textbf{Proof of Theorem \ref{thm:series}.}}
First, we compute $\Scal_{\p\Om}[\tzeta_0]$. We set $z=\Psi(w)\in\CC\setminus\overline{\Om}$ and use \eqnref{eqn:Faberdefinition} and \eqnref{eqn:log_decomp} to derive
 \begin{align*}
\Scal_{\p \Om}[\tzeta_0](z)&=\frac{1}{2\pi}\int_{\p \Om}\ln|z-\tilde{z}|	\frac{1}{h(\rho,\tilde{\theta})}d\sigma(\tilde{z})\\
&=\mbox{Re}\left\{\frac{1}{2\pi}\int_0^{2\pi}\log(\Psi(w)-\Psi(\gamma e^{i\ttheta}))\right\}d\ttheta\\
&=\lim_{r\rightarrow\gamma^+}\mbox{Re}\left\{\frac{1}{2\pi}\int_0^{2\pi}\log(\Psi(w)-\Psi(r e^{i\ttheta}))d\ttheta\right\}\\
&=\lim_{r\rightarrow\gamma^+}\mbox{Re}\{\log w\}=\ln |w|.
\end{align*}
Indeed, we have $\frac{1}{2\pi}\int_0^{2\pi}F_n(\Psi(re^{i\ttheta}))d\ttheta=0$ for $r>\gamma$, $n\in\NN$  because the series in \eqnref{eqn:Faberdefinition} has a zero constant term. 
From the continuity of the single layer potential \eqnref{Scal_zeta0} follows.
By applying the jump relations \eqref{eqn:Kstarjump} to \eqnref{Scal_zeta0} we obtain
\beq\label{Kcalzero}\Kcal^*_{\p\Om}[\zeta_0]=\frac{1}{2}\zeta_0.
\eeq

Second, we expand the single layer potential on $\p\Om$ by the geometric basis $\{\zeta_{\pm m}\}_{m\in\NN}$. We use the fact that for $\varphi\in H^{-1/2}_0(\p\Om)$, the function $u:=\Scal_{\p\Om}[\varphi]$ is the unique solution to the transmission problem
\begin{equation}\label{eqn:SLtransmission}
	\ds\begin{cases}
	\ds\Delta u=0\quad&\text{in } \Rtwo\setminus\partial\Omega,\\
		\ds	u\big|^+-u\big|^-=0\quad&\text{a.e. on }\partial \Omega,\\[1mm]
	\ds	\frac{\partial u}{\partial \nu}\Bigr|^{+}-\frac{\partial u}{\partial \nu}\Bigr|^-=\varphi\quad&\text{a.e. on }\partial\Omega,\\
	\ds	u(x)=O({|x|^{-1}})\quad &\text{as } |x|\rightarrow\infty.
	\end{cases}
\end{equation}
If we set $u$ as
$$u(z)=		\begin{cases}
			\ds F_m(z)\quad&\text{for }z\in\Omega,\\[2mm]
			\ds F_m(z)-w^m+\gamma^{2m}\overline{w^{-m}}\quad &\text{for } z\in\CC\setminus\overline{\Om},
		\end{cases}
$$
then it satisfies \eqnref{eqn:SLtransmission} with
$$
\varphi(z)=\frac{\partial u}{\partial \nu}\Bigr|_{+}-\frac{\partial u}{\partial \nu}\Bigr|_-
=\frac{\partial}{\partial\nu}\left(-w^m+\gamma^{2m}\overline{w^{-m}}\right)\Big|_+=-2m\gamma^m \tzeta_m(\theta)\quad\mbox{a.e. on }\p\Om.$$
Indeed, the above equation holds for $z\in\p\Om$ which is not a corner point (see Lemma \ref{lemma:regularity} for differentiability).
Therefore, for each $m=1,2,\dots$ it holds that
	\beq\label{SL:simple}
				\SingleOmega[\tzeta_m](z)=
		\begin{cases}
			\ds-\frac{1}{2m\gamma^m}F_m(z)\quad&\text{for }z\in\Omega,\\[3mm]
			\ds-\frac{1}{2m\gamma^m}\left(F_m(z)-w^m+\gamma^{2m}\overline{w^{-m}}\right)\quad &\text{for } z\in\CC\setminus\overline{\Om}.
		\end{cases}
		\eeq
We remind the reader that for each $m\in\NN$, the Faber polynomial satisfies
\beq\label{Fm_exterior}
F_m(z)=w^m+c_{m,1}w^{-1}+c_{m,2}w^{-2}+\cdots\quad \mbox{with }z=\Psi(w)\in\CC\setminus\overline{\Om}.
\eeq
Equation \eqnref{eqn:seriesSLpositive} follows from  \eqnref{Fm_exterior}. In view of the conjugate property $$\SingleOmega[\tzeta_{-m}](z)=\Scal_{\p\Om}[\overline{\tzeta_m}](z)=\overline{\SingleOmega[\tzeta_{m}](z)},$$ we complete the proof of (a).


Now, we consider the double layer potential on $\p\Om$. One can easily show that for any $\psi\in L^2(\p\Om)$, the function $v:=\DoubleOmega\psi$ is the unique solution to the following problem:
\begin{equation}\label{eqn:DLtransmission}
	\ds\begin{cases}
	\ds\Delta v=0\quad&\text{in } \Rtwo\setminus\partial\Omega,\\
		\ds	v\big|^+-v\big|^-=\psi\quad&\text{a.e. on }\partial \Omega,\\[1mm]
	\ds	\frac{\partial v}{\partial \nu}\Bigr|^{+}-\frac{\partial v}{\partial \nu}\Bigr|^-=0\quad&\text{a.e. on }\partial\Omega,\\
	\ds	v(x)=O({|x|^{-1}})\quad &\text{as } |x|\rightarrow\infty.
	\end{cases}
\end{equation}
It is straightforward to see \eqnref{eqn:seriesDLzero}.
For $m=1,2,\dots$, one can observe from \eqnref{Fm_exterior} that
	\begin{align}\label{DL:simple}
			\ds	\DoubleOmega [\teta_m](z)&=
		\begin{cases}
			\ds\frac{1}{2\gamma^m}F_m(z)\quad&\text{for }z\in\Omega,\\[3mm]
			\ds\frac{1}{2\gamma^m}\left(F_m(z)-w^m-\gamma^{2m}\overline{w^{-m}}\right)\quad &\text{for } z\in\CC\setminus\overline{\Om}.
		\end{cases}
	\end{align}
From the conjugate relation $$\DoubleOmega [\teta_{-m}](z)=\overline{\DoubleOmega [\teta_{m}](z)},$$
 we complete the proof of (b).

To prove (c), we use the jump relation
\begin{align}\notag
\KstarOmega[\zeta_m]&=
	\frac{\partial}{\partial\nu}\SingleOmega[\zeta_m]\Big|^{-}+\frac{1}{2}\zeta_m\\
	&=-\frac{1}{2\sqrt{m}\gamma^m}\frac{\partial F_m}{\partial\nu}\Big|^-	+\frac{1}{2}\zeta_m
	=-\frac{1}{2\sqrt{m}\gamma^m}\frac{\partial F_m}{\partial\nu}	+\frac{1}{2}\zeta_m.\label{eqn:proofc_0}
\end{align}
For any non-corner point, say $x=\Psi(\rho_0,\widetilde{\theta})$, $h(\rho_0,\theta)$ is bounded away from zero and infinity in a neighborhood of $(\rho_0,\widetilde{\theta})$ and, hence, it holds for a sufficiently smooth function $u$ that
\begin{align}\label{eqn:eqlim}
	\frac{\partial u}{\partial \nu}\Big|^+(x)
	=\frac{1}{h(\rho_0,\widetilde{\theta})}\lim_{\rho\rightarrow \rho_0}
		\frac{\partial (u\circ\Psi)}{\partial\rho}(e^{\rho+i\widetilde{\theta}}).
\end{align}

First, we show that $\KstarOmega[\zeta_m]\in K^{-1/2}(\partial\Omega)$. 
Since $F_m(z)$ is a polynomial and $h(\rho,\theta)=|\frac{\partial \Psi}{\partial\rho}|$,
for any $\rho_1>\rho_0$ there is a constant $M>0$ such that 
\beq\label{eqn:uniform:bdd}
\left| \frac{\partial (F_m\circ\Psi)}{\partial\rho}(e^{\rho+i\theta})\right|
\leq Mh(\rho,\theta)\quad\text{for } \rho_0\leq\rho\leq\rho_1.
\eeq

From \eqnref{eqn:proofc_0} we have
\begin{align}
	\frac{1}{2\pi}\int_{\partial\Omega}
	\KstarOmega[\zeta_m] \eta_{-n} d\sigma
	&=-\frac{1}{2\sqrt{m}\gamma^m}
	\frac{1}{2\pi}\int_{\partial\Omega}\frac{\partial F_m}{\partial\nu} \eta_{-n} d\sigma
	+\frac{1}{2}\delta_{m,-n}.\label{eqn:proofc}
\end{align}
Fix $m,n\geq 1.$ Applying \eqref{eqn:eqlim} to $F_m$, it follows that
\begin{align*}
	\frac{1}{2\pi}\int_{\partial\Omega}\frac{\partial F_m }{\partial\nu}\eta_{-n}d\sigma
	&=\frac{1}{2\pi}\int_{0}^{2\pi}\lim_{\rho\rightarrow\rho_0}\frac{\partial (F_m\circ\Psi)}{\partial\rho}(e^{\rho+i\theta})\frac{e^{in\theta}}{\sqrt{n}}\,d\theta.
\end{align*}
From Lemma \ref{lemma:regularity}, $h(\rho_0,\theta)$ is integrable and $\int_{0}^{2\pi}h(\rho,\theta)d\theta$ converges to $\int_{0}^{2\pi}h(\rho_0,\theta)d\theta$ as $\rho\rightarrow\rho_0$. In view of \eqref{eqn:uniform:bdd}, we can exchange the order of the limit and the integration in the above equation by the dominated convergence theorem. 
We obtain 
\begin{align*}
		\frac{1}{2\pi}\int_{\partial\Omega}
		\frac{\partial F_m }{\partial\nu}\eta_{-n}
		d\sigma
	&=\lim_{\rho\rightarrow\rho_0}\frac{1}{2\pi}\int_{0}^{2\pi}
		\left[me^{m(\rho+i\theta)}-\sum_{k=1}^\infty kc_{m,k}e^{-k(\rho+i\theta)}\right]
	\frac{e^{in\theta}}{\sqrt{n}}d\theta\\
	&={\sqrt{m}\gamma^m}\delta_{m,-n}-\frac{\sqrt{n}c_{m,n}}{\gamma^k}.
\end{align*}
From \eqnref{eqn:proofc}, we deduce
\begin{align*}
\frac{1}{2\pi}\int_{\partial\Omega} \Kcal^*_{\partial\Omega}[\zeta_m]\eta_{-n} d\sigma
&=\frac{1}{2}\frac{\sqrt{n}}{\sqrt{m}}\frac{c_{m,n}}{\gamma^{m+n}}
\end{align*}
and $\frac{1}{2\pi}\int_{\partial\Omega} \Kcal^*_{\partial\Omega}[\zeta_m]\eta_{n} d\sigma=0$. We conclude $\KstarOmega[\zeta_m]\in K^{-1/2}(\partial\Omega)$ by the bound \eqref{eqn:GrunskyBounds}. 
By taking the complex conjugate, we can prove the formula for the negative indices. 	
We can similarly prove (c) for $\Kcal_{\p\Om}$ by using (b). 
\qed

\subsection{An ellipse case}
Let us derive the series expansions for the boundary integrals for a simple example.
Consider the conformal mapping $$\Psi(w)=w+\frac{a}{w}.$$ Then for each $\rho>\rho_0$, $\Psi(e^{\rho+i\theta})$ is a parametric representation of an ellipse.
Substituting $\Psi(w)$ into \eqref{eqn:Fabergenerating} gives
\begin{align*}
	\frac{w\Psi'(w)}{\Psi(w)-z}&=1+\frac{zw-2a}{w^2-zw+a}\\&=1+\frac{(w_1+w_2)w-2w_1w_2}{(w-w_1)(w-w_2)}=1+\left(\frac{w_1}{w-w_1}+\frac{w_2}{w-w_2}\right)\\
	&=1+\sum_{n=1}^{\infty}(w_1^n+w_2^n)w^{-n}
\end{align*}
where
\begin{equation*}
	w_1=\frac{z+\sqrt{z^2-4a}}{2}~\text{ and }~w_2=\frac{z-\sqrt{z^2-4a}}{2}.
\end{equation*}
Comparing with the right hand side of equation \eqref{eqn:Fabergenerating}, the Faber polynomials associated with the ellipse are
\begin{align*}
	F_0(z)&=1\\
	F_m(z)&=\frac{1}{2^m}\left[\left(z+\sqrt{z^2-4a}\right)^m+\left(z-\sqrt{z^2-4a}\right)^m\right],\quad m=1,2,\cdots.
\end{align*}

For each $m\in\NN$, $F_m(\Psi(w))=w^m+\frac{a^m}{w^m}$ so that the Grunsky coefficients are
\begin{equation*}
	 c_{m,k}=\begin{cases}
		a^k&\text{if }k=m,\\
		0&\text{otherwise}.
	\end{cases}
\end{equation*}
From Theorem \ref{thm:series} (c) it follows that
\begin{align}
	\ds&	\KstarOmega[\tzeta_m](z)=\frac{1}{2}\frac{a^m}{\gamma^{2m}}\tzeta_{-m}(z),\\
	\ds	&\KstarOmega[\tzeta_{-m}](z)=\frac{1}{2}\frac{\bar{a}^m}{\gamma^{2m}}\tzeta_{m}(z).
\end{align}
Hence, $\KstarOmega$ corresponds to the $2\times 2$ matrix 
\begin{equation*}
\frac{1}{2\gamma^{2m}}
	\begin{bmatrix}
		0 & {a^m}\\
		{\bar{a}^m}	& 0
	\end{bmatrix}
\end{equation*}
in the space spanned by $\tzeta_{-m}$ and $\tzeta_{m}$. 
In particular, $\KstarOmega$ has the eigenvalues and the corresponding eigenfunctions
 $$\pm \frac{1}{2}\frac{|a|^m}{\gamma^{2m}},\quad \pm\left(\frac{{a}}{|a|}\right)^m\tzeta_{-m}+\tzeta_{m},\quad m=1,2,\dots.$$

\subsection{Integral formula for the conformal mapping coefficients }\label{sec:transmission_solution}

\begin{theorem}\label{thm:a_k}
We assume the same regularity for $\Om$ as in Theorem \ref{thm:series}. 
Then, the coefficients of the exterior conformal mapping $\Psi(w)$ satisfy 
\begin{align}
\ds&\gamma^2 = \frac{1}{2\pi}\int_{\p\Om}z\overline{\varphi(z)}\, d\sigma(z),\\
\ds&a_m = \frac{\gamma^{m-1}}{2\pi}\int_{\p\Om}z|\varphi(z)|^{-m+1}(\varphi(z))^m\, d\sigma(z),\quad m=0,1,\dots,
\end{align}
where $$\varphi(z)=(I-2\Kcal^*_{\p\Om})^{-1}(\nu_1+i\nu_2)$$
and $\nu=(\nu_1,\nu_2)$ is the outward unit normal vector of $\p \Om$.
\end{theorem}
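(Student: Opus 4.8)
The plan is to identify the density $\varphi$ in closed form and then reduce each identity to the extraction of a single Laurent coefficient of $\Psi$. First I would write the complex normal field in terms of the conformal map. Since $\frac{\partial\Psi}{\partial\rho}=\Psi'(w)\,w$ and the outward unit normal is $\frac{\partial\Psi}{\partial\rho}/h$ with $h=|w\Psi'(w)|$, on $\p\Om$ (where $w=\gamma e^{i\theta}$) one has $\nu_1+i\nu_2=w\Psi'(w)/h$. Expanding $w\Psi'(w)=w-\sum_{m\ge1}m a_m w^{-m}$ and dividing by $h$ yields
\begin{equation*}
\nu_1+i\nu_2=\gamma\,\tzeta_1-\sum_{m=1}^\infty m a_m\gamma^{-m}\,\tzeta_{-m}
\end{equation*}
in the (unnormalized) geometric density basis, since $\tzeta_{\pm m}=e^{\pm im\theta}/h$.

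The key claim is that $\varphi=\gamma\,\tzeta_1$. I would establish this by verifying $(I-2\KstarOmega)[\gamma\tzeta_1]=\nu_1+i\nu_2$ directly from Theorem \ref{thm:series}(c). The only algebraic input needed is the identity $c_{1,k}=a_k$, equivalently $c_{k,1}=k a_k$ by the Grunsky identity \eqref{eqn:Grunskyidentity}; this follows by comparing the exterior expansion \eqref{eqn:Faberdefinition} of $F_1$ with $F_1(z)=z-a_0=\Psi(w)-a_0$. Using \eqref{NP_series3} we have $\KstarOmega[\tzeta_1]=\KstarOmega[\zeta_1]=\tfrac12\sum_{k\ge1}\mu_{k,1}\zeta_{-k}$; converting the normalization $\zeta_{-k}=k^{1/2}\tzeta_{-k}$ and inserting $\mu_{k,1}=k^{-1/2}c_{k,1}\gamma^{-1-k}$ turns the negative-index part of $(I-2\KstarOmega)[\gamma\tzeta_1]$ into exactly $-\sum_{m\ge1}m a_m\gamma^{-m}\tzeta_{-m}$, while the $\tzeta_1$ term is untouched, so the two sides agree. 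Equivalently, and perhaps more transparently, one may discover $\varphi$ via part (a): the equation $(I-2\KstarOmega)\varphi=\nu_1+i\nu_2$ reads $\frac{\partial}{\partial\nu}\SingleOmega[\varphi]\big|^-=-\tfrac12(\nu_1+i\nu_2)$, and since the coordinate function $z$ is harmonic with $\partial z/\partial\nu=\nu_1+i\nu_2$, the combination $\SingleOmega[\varphi]+\tfrac12 z$ is harmonic in $\Om$ with vanishing interior Neumann data, hence constant; matching $\SingleOmega[\varphi]=-\tfrac12 z+\text{const}$ against the Faber-polynomial expansion \eqref{eqn:seriesSLpositive}--\eqref{eqn:seriesSLnegative} and using $z=F_1(z)+a_0$ again forces $\varphi=\gamma\tzeta_1$.

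With $\varphi=\gamma e^{i\theta}/h$ in hand, I would conclude by substitution, using $d\sigma(z)=h\,d\theta$ and $z=\Psi(\gamma e^{i\theta})$. Since $h$ and $\gamma$ are real and positive, $\overline\varphi=\gamma e^{-i\theta}/h$ gives $z\overline\varphi\,d\sigma=\gamma\,\Psi(\gamma e^{i\theta})e^{-i\theta}\,d\theta$, so $\frac{1}{2\pi}\int_{\p\Om}z\overline\varphi\,d\sigma$ extracts the constant term of $\gamma e^{-i\theta}\Psi(\gamma e^{i\theta})$, which is $\gamma^2$. Likewise $|\varphi|=\gamma/h$ gives $|\varphi|^{-m+1}\varphi^m=\gamma e^{im\theta}/h$, so the $a_m$ integral collapses to $\frac{\gamma^m}{2\pi}\int_0^{2\pi}\Psi(\gamma e^{i\theta})e^{im\theta}\,d\theta$, i.e. $\gamma^m$ times the coefficient of $e^{-im\theta}$ in $\Psi(\gamma e^{i\theta})$, namely $\gamma^m\cdot a_m\gamma^{-m}=a_m$ (and $a_0$ in the case $m=0$).

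The main obstacle is making sense of $(I-2\KstarOmega)^{-1}$: the constant-index basis function $\zeta_0$ satisfies $(I-2\KstarOmega)[\zeta_0]=0$ by \eqref{eqn:Kcal:zeta0}, so the operator is not invertible on all of $\Kzeta(\p\Om)$ and $\varphi$ is a priori determined only modulo $\zeta_0$. This must be handled carefully, because adding $c\,\zeta_0=c/h$ to $\varphi$ would alter the first formula by $\overline c\,a_0$; one therefore needs $\varphi$ to be precisely the representative with vanishing $\zeta_0$-component. I would resolve this by working on the complement of the kernel: the expansion above shows $\nu_1+i\nu_2$ has no $\zeta_0$-component, and the Grunsky bound \eqref{eqn:GrunskyBounds} with $m=1$ gives $\sum_k k|a_k|^2\gamma^{-2k}\le\gamma^2<\infty$, so $\nu_1+i\nu_2\in\Kzeta(\p\Om)$ lies in the range of $I-2\KstarOmega$; interpreting the inverse on that range (equivalently, on $K^{-1/2}_0(\p\Om)$) singles out $\varphi=\gamma\tzeta_1$. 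A secondary technical point is that $\nu_1+i\nu_2=w\Psi'(w)/h$ holds only almost everywhere at corner points, which I would justify through the integrability of $h$ and $1/h$ from Lemma \ref{lemma:regularity}.
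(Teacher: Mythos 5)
Your proposal is correct and takes essentially the same route as the paper: the paper likewise identifies $\varphi=\gamma\tzeta_1$ from $F_1(z)=z-a_0$ via the interior normal derivative of $\Scal_{\p\Om}[\tzeta_1]$ (your second, single-layer route), and then obtains $\gamma^2$ and $a_m$ by exactly your Laurent-coefficient extraction $\frac{1}{2\pi}\int_{\p\Om}z\,\tzeta_m\,d\sigma=\gamma^{-m}a_m$ together with the pointwise relations $\tzeta_m=\tzeta_0^{-m+1}\tzeta_1^m$ and $|\tzeta_1|=\tzeta_0$. Your direct verification through the NP series \eqref{NP_series3} with $c_{k,1}=ka_k$ is a harmless variant, and your explicit treatment of the kernel of $I-2\Kcal^*_{\p\Om}$ spanned by $\zeta_0$ (inverting on the mean-zero subspace) is a point the paper's proof leaves implicit.
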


\begin{proof}
As before, we let $z=\Psi(w)$ be the exterior conformal mapping given as \eqnref{conformal:Psi}.
Since $d\sigma(z)=h(\rho,\theta)d\theta$, we have
\begin{align}\notag
\int_{\p \Om}z\tzeta_m(z)\, d\sigma(z)&=\int_0^{2\pi}\left(e^{\rho_0+i\theta}+a_0+a_1e^{-\rho_0-i\theta}+\cdots\right)\frac{e^{im\theta}}{h(\rho_0,\theta)}h(\rho_0,\theta)\, d\theta\\
&=\frac{2\pi}{\gamma^m} a_m, \qquad m=-1,1,0,\dots.\label{coeff_expression}
\end{align}

We remind the reader that, by taking the interior normal derivative of the single layer potential, $\tzeta$ satisfies
\beq
(-\frac{1}{2}I+\Kcal^*_{\p\Om})\tzeta_m=-\frac{1}{2m\gamma^m}\pd{F_m}{\nu}\Big|_{\p\Om}.
\eeq
Note that
\beq
\tzeta_m = \tzeta_0^{-m+1}\tzeta_1^m,\quad \tzeta_{-m}(z)=\overline{\tzeta_m(z)}
\eeq
and
\beq
|\tzeta_1(\theta)|=\frac{1}{h(\rho,\theta)}=\tzeta_0(\theta).
\eeq
Applying these relations to \eqnref{coeff_expression}, it follows that
$$\gamma=\frac{1}{2\pi}\int_{\p\Om}z\tzeta_{-1}(z)\, d\sigma(z)=\frac{1}{2\pi}\int_{\p\Om}z\frac{1}{2\gamma}\, \overline{(\frac{1}{2}I-\Kcal^*_{\p\Om})^{-1}\pd{F_1}{\nu}}\, d\sigma(z).$$
For $k= 0,1,2,\dots$, we have
\begin{align*}
	a_m
	&=\frac{\gamma^m}{2\pi} \int_{\partial\Omega} z\tzeta_0^{-m+1}\tzeta_1^m\, d\sigma(z)\\
	&= \frac{\gamma^m}{2\pi} \int_{\partial\Omega} z\big|\tzeta_1\big |^{-m+1}\zeta_1^m\, d\sigma(z).
	\end{align*}
Owing to the fact that $F_1(z)=z-a_0$, we deduce
$$\tzeta_1(z)=\frac{1}{2\gamma}(\frac{1}{2}I-\Kcal^*_{\p\Om})^{-1}(\nu_1+i\nu_2)\Big|_{\p\Om}.$$
Therefore we complete the proof.
\end{proof}

\section{Numerical computation}
We provide the numerical scheme and examples for the transmission problem based on the series expansions of the boundary integral operators. First, we explain how to obtain the exterior conformal mapping for a given simply connected domain in section \ref{conformal:numerical}. We then provide  the numerical computation based on the finite section method in section \ref{sec:finitesection}.
\subsection{Computation of the conformal mapping for a given curve}\label{conformal:numerical}
From Theorem \ref{thm:a_k}, one can numerically compute the exterior conformal mapping for a given curve by solving
\begin{equation}\label{eqn:eqnforconformal}
	\left(\frac{1}{2}I-\Kcal^*_{\p\Om}\right)[\varphi](z)=\nu_1+i\nu_2.	
\end{equation}
It is well known that one can solve such a boundary integral equation by applying the Nystr\"{o}m discretization for $\KstarOmega$ on $\p\Om$. There, we first parametrize $\partial\Omega$, say $z(t)$, and discretize it, say $\{z_p\}_{p=1}^P$ . We then approximate the boundary integral operator $\KstarOmega[\varphi]$ as
$$\KstarOmega[\varphi](x) \approx \sum_{p=1}^P\frac{\partial}{\partial\nu_x}\Gamma(x-z_p)\varphi(z_p) w_p,\quad x\in\partial\Omega.$$ The weights $\{w_p\}_{p=1}^P$ are chosen by numerical integration methods.  To obtain the accurate solution to \eqnref{eqn:eqnforconformal}, we apply the RCIP method \cite{Helsing:2013:SIE} (see also the references therein for further details).

Figure \ref{fig:rectangle} shows the exterior conformal mapping for the rectangular domain with height $1$ and  width $6.$ To ensure accuracy, we plot the difference $|\gamma^{(k)}-\gamma^{(k-1)}|$, where $\gamma^{(k)}$ is the logarithmic capacity of the domain $\Om$ computed with $k$ subdivisions in the RCIP method.
\begin{figure}[h!]
     \begin{center}
     \begin{subfigure}{.475\textwidth}
 	  	\centering
 	  	\includegraphics[height=5.5cm, width=7cm, trim={0.2cm 0.5cm 0.2cm 0.5cm}, clip]{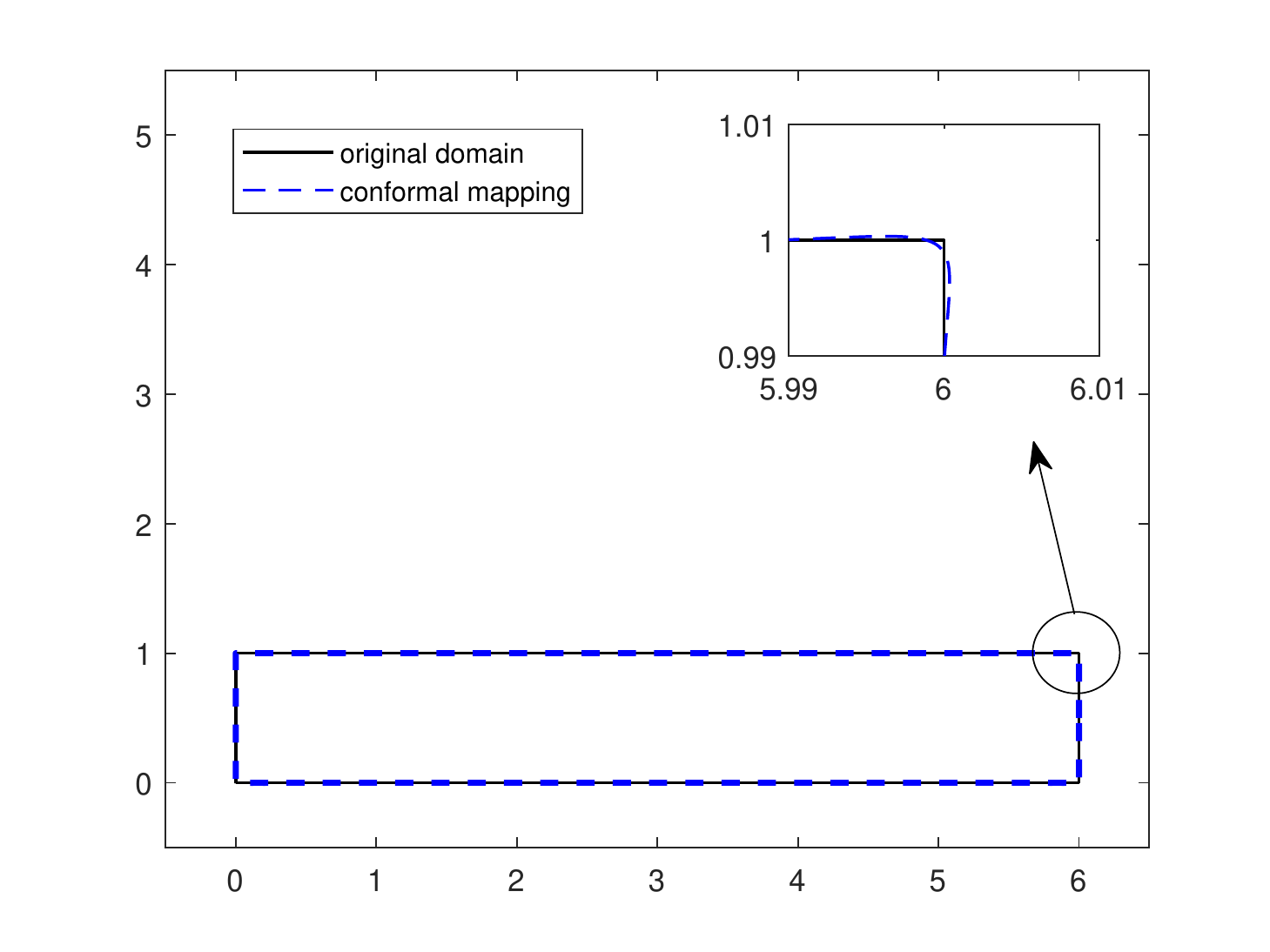}
 	  	\caption{Rectangular domain $\Omega$}
 	\end{subfigure}
 	\hfill
 	\begin{subfigure}{.475\textwidth}
 	  	\centering
 	  	\includegraphics[height=5.5cm,width=7cm,  trim={0 0.5cm 0 0.8cm}, clip]{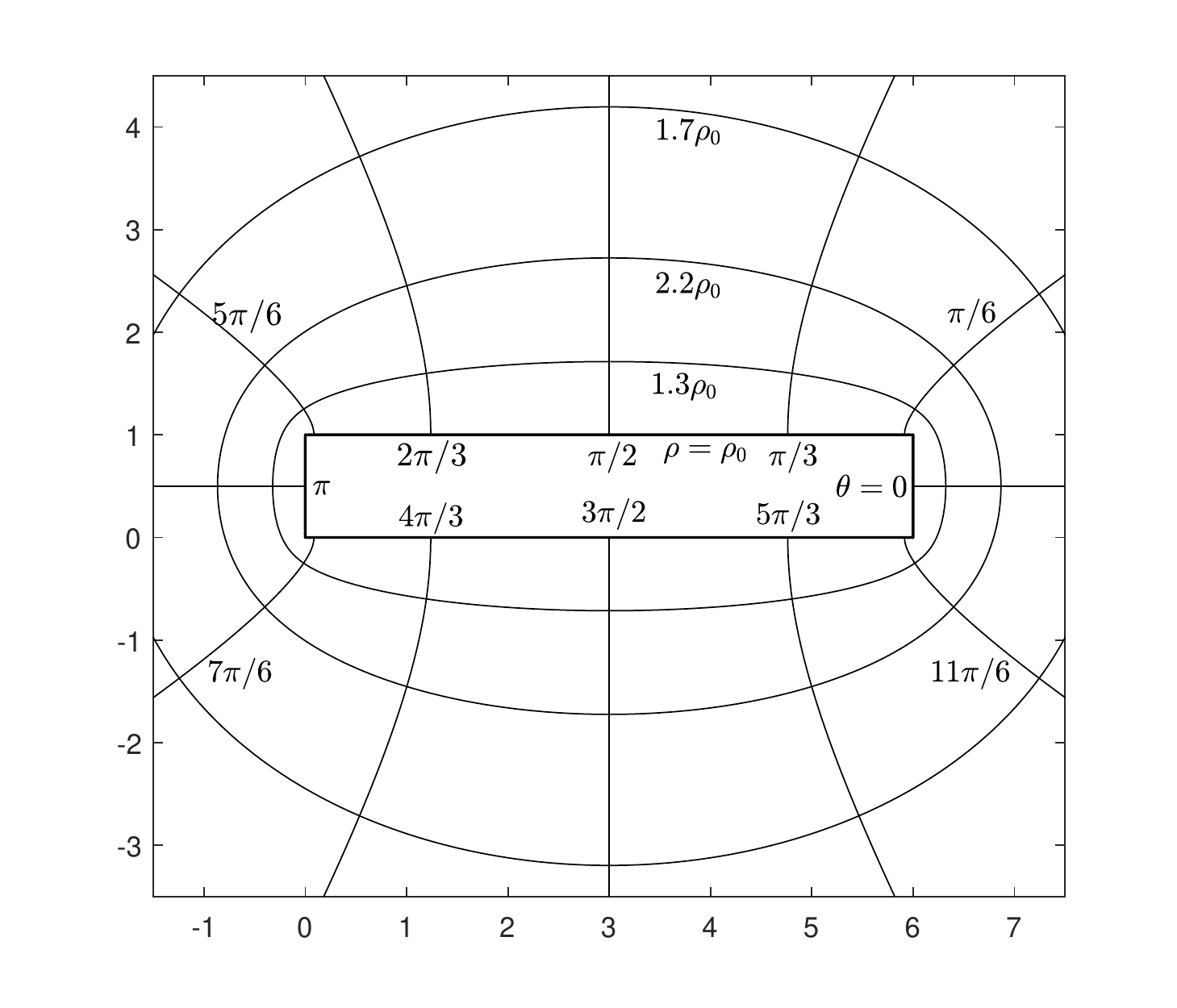}
 	  	\caption{Level coordinates curves of $\Psi(\rho,\theta)\}$}
 	\end{subfigure}
 
 \hskip 0.5cm
 	\begin{subfigure}{.475\textwidth}
 	  	\centering
 	  	\includegraphics[height=5.5cm,width=7cm,  trim={0.5cm 0 1cm 0.5cm}, clip]{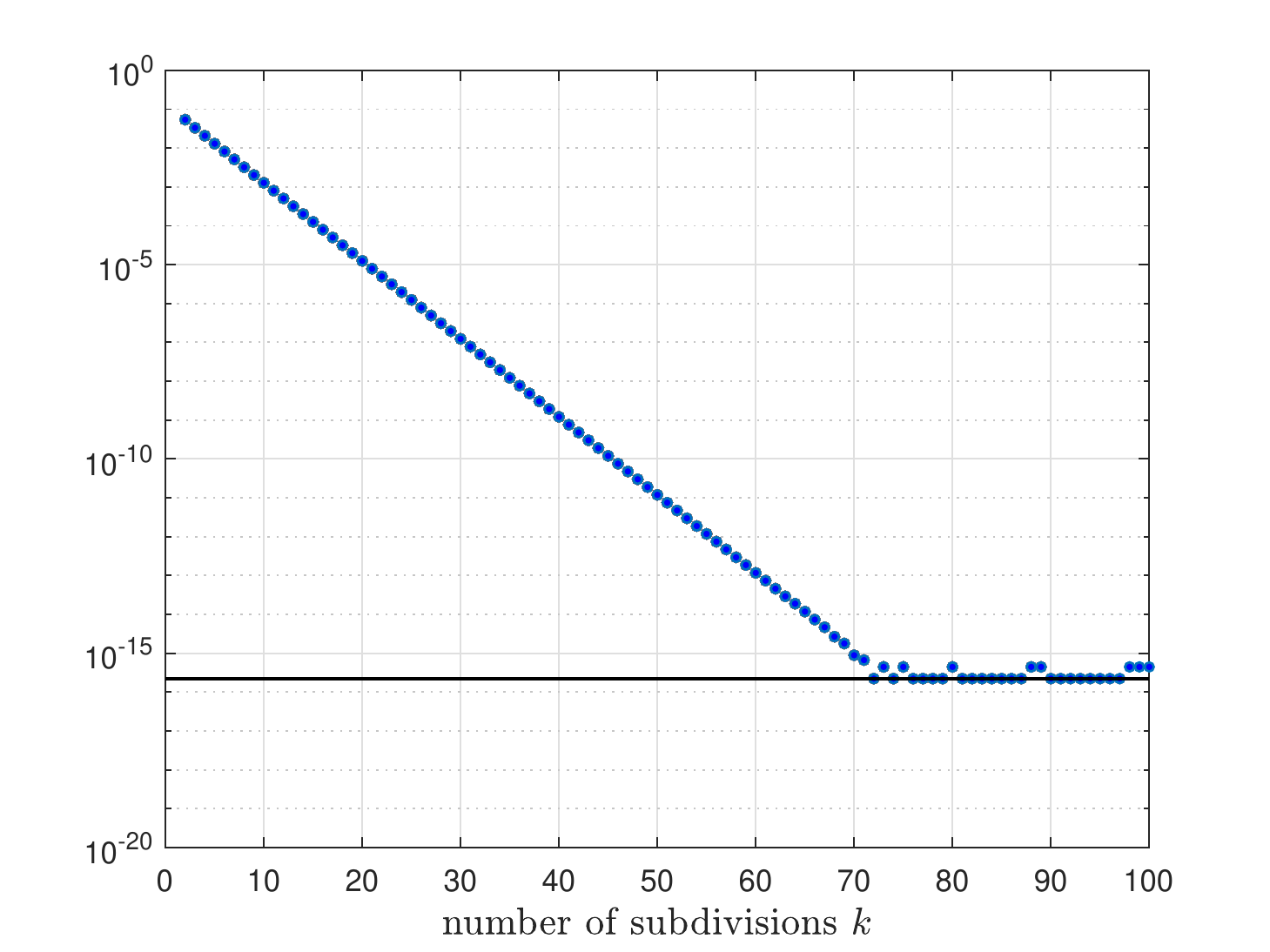}
 		 \caption{$|\gamma^{(k)}-\gamma^{{(k-1)}}|$}
 	\end{subfigure}
 	\vskip -.5cm
    \caption{(a) A rectangular-shaped domain of height $1$ and width $6$ and its close-up image near the corner. The dashed curve corresponds to the (truncated) Laurent series \eqnref{conformal:Psi} with the coefficients numerically computed as explained in section \ref{conformal:numerical}. (b) Level curves of curvilinear coordinates $(\rho,\theta)$ made by the exterior conformal mapping associated with $\Omega$. 
        (c) Convergence of $|\gamma^{(k)}-\gamma^{{(k-1)}}|$ where $k$ is the number of subdivisions for the RCIP methods. Machine precision is achieved. Logarithmic capacity $\gamma$ of the domain $\Omega$ is approximately 1.941572495732408.} 
 \label{fig:rectangle}
 \end{center}
 \end{figure}

\subsection{Numerical scheme for the transmission problem solution and the spectrum of the NP operator based on the finite section method}\label{sec:finitesection}
We now consider the numerical approximation of the solution to the boundary integral equation $(\lambda I -\mathcal{K}^*_{\p\Om})x=y$ and the spectrum of $\KstarOmega$. Once we have the infinite matrix expression for an operator, it is natural to consider its finite dimensional projection. 
More precisely speaking, we apply the finite section method to $(\lambda I -\Kcal^*_{\p\Om})x=y$ on $\Kzeta(\p\Om)$. Recall that $\Kzeta(\p\Om)$ is a separable Hilbert space. We set $H=\Kzeta$ and $$H_n=\mbox{span}\left\{\zeta_{-n},\zeta_{-n+1},\cdots,\zeta_{-1},\zeta_1,\cdots,\zeta_{n-1},\zeta_n\right\} \quad\mbox{for each }n\in\NN.$$
Then, $H_n$ is an increasing sequence of finite-dimensional subspaces of $H$ such that the union of $H_n$ is dense in $H$. 
We may identify the orthogonal projection operator to $H_n$, say $P_n$, as the operator on $l^2_0(\CC)$ given by
\begin{equation*}
	P_n(a)=(\dots,0,0,a_{-n},\dots,a_{-2},a_{-1},a_{1},a_2,\dots,a_n,0,0,\dots)\quad\mbox{for } a\in l^2_0(\CC).
\end{equation*}
Clearly, we have
$
	\|P_n a-a\|_{l^2}^2=\sum_{|m|> n}|a_m|^2\rightarrow 0\ \mbox{ as }n\rightarrow\infty,
$
so that $P_n a\rightarrow a$ as $n\rightarrow\infty$.
We denote $[\Kcal^*_{\p\Om}]_n$ the $n$-th section of $[\KstarOmega]$, that is $$[{\KstarOmega}]_n=P_n[\KstarOmega]P_n=P_n \KstarOmega P_n.$$
We identify the range of $P_n$ with $\mathbb{C}^{2n}$ and $[{\KstarOmega}]_n$ with a $2n\times 2n$-matrix, respectively.

Using the finite section of $\Kcal^*_{\p\Om}$, we can approximate the solution to the boundary integral equation and the spectrum of $\Kcal^*_{\p\Om}$ as follows:
\begin{itemize}
\item [(a)] [Computation of solution to the boundary integral equation]\\
Let $|\lambda|>\frac{1}{2}$. Then, we have $\|I-(I-\frac{1}{\lambda}\Kcal^*_{\p\Om})\|=\frac{1}{\lambda}\|\Kcal^*_{\p\Om}\|<1$. From Corollary \ref{cor:finitesection} in the appendix, the projection method for $(\lambda I-\Kcal^*_{\p\Om})$ converges, {\it i.e.},
there exists an integer $N$ such that for each $y\in\Kzeta(\p\Om)$ and $n\geq N$, there exists a solution $x_n$ in $H_n$ to the equation $P_n (\lambda I-\Kcal^*_{\p\Om})P_n x_n = P_n y$, which is unique in $H_n$, and the sequence $\{x_n\}$ converges to $(\lambda I-\KstarOmega)^{-1}y$. 
\item[(b)][Computation of the spectrum of the NP operator]\\
For self-adjoint operators on a separable complex Hilbert space, the spectrum outside the convex hull of the essential spectrum can be approximated by eigenvalues of truncation matrices.
Since $\Kcal^*_{\p\Om}$ is self-adjoint in $K^{-1/2}(\p\Om)$, the eigenvalues of the finite section operator $[{\KstarOmega}]_n$ converge to those of $\Kcal^*_{\p\Om}$ as shown in \cite[Theorem 3.1]{Bottcher:2001:AAN}. Since $[{\KstarOmega}]_n$ is a finite-dimensional matrix, one can easily compute their eigenvalues. 
\end{itemize}

\subsection{Numerical examples for the transmission problem}
We provide examples of numerical computations for the transmission problem \eqnref{cond_eqn0}. More detailed numerical results will be reported in a separate paper. 
\begin{example}
		We take $\Omega$ to be the kite-shaped domain whose boundary is parametrized by
		\begin{align*}
		\partial\Omega = \{x(t),y(t):=(\cos{t}+0.65\cos(2t)-0.65, 1.5\sin t): t\in[0,2\pi] \}.
	\end{align*}
	We set $\epsilon_c = 10000, ~\epsilon_m = 1$ and choose the entire harmonic field $H(x,y)=x.$	We computed the conformal mapping coefficients $\gamma$ and $a_k$ up to $k=50$ terms to approximate $\partial\Omega$ and truncated the matrix $[\KstarOmega]_n$ with $n=120$.
	The result is demonstrated in Figure \ref{fig:transmissionsolution}.
\end{example}

\begin{example}
We take $\Omega$ to be the boat-shaped domain whose exterior conformal mapping is given by
\begin{align*}
		{\Psi}(w):=w+\frac{0.3}{w}+\frac{0.08}{w^4}\quad\mbox{for } |w|\geq 1.
	\end{align*}
 We set $\epsilon_c = 10, ~\epsilon_m = 1$ and choose the harmonic field $H(x,y)=y$. We used the matrix $[\KstarOmega]_{n}$ with $n=120.$ The result is demonstrated in Figure \ref{fig:transmissionsolution}.
\end{example}
\begin{figure}[!h]
\includegraphics[height=6cm, width=7.7cm, trim={0.2cm 0.2cm 1cm 0.2cm}, clip]{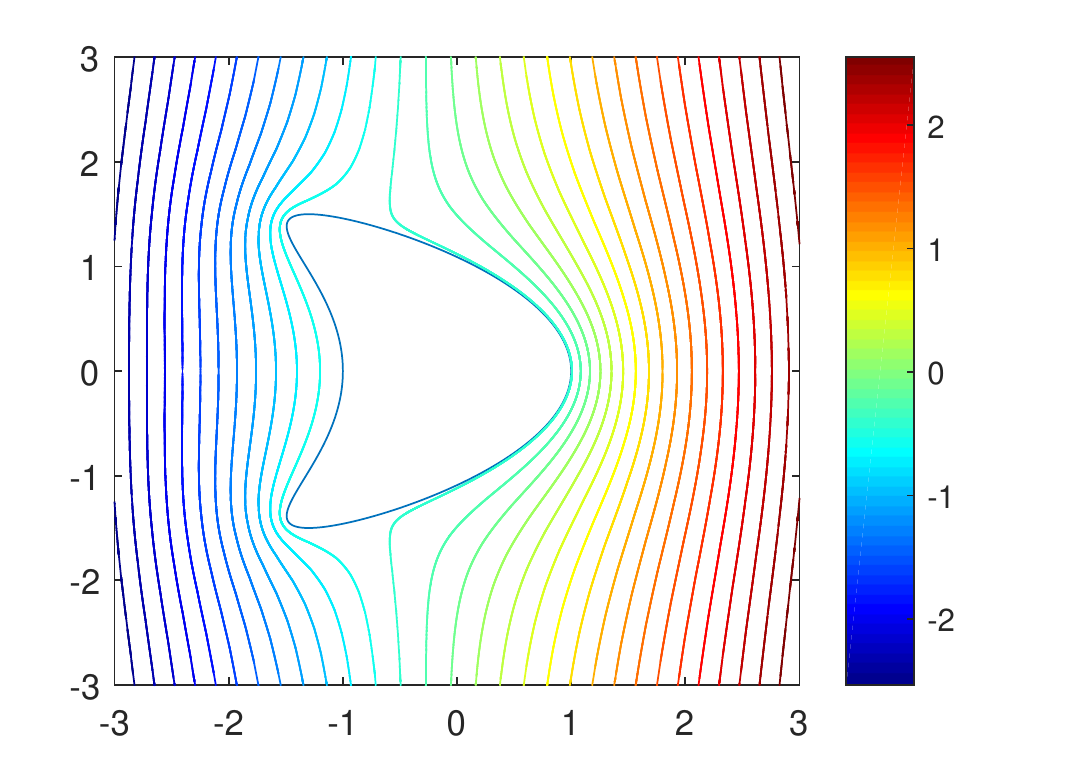}
\hskip .2cm
 \includegraphics[height=6cm, width=7.7cm, trim={0.2cm 0.2cm 1cm 0.2cm}, clip]{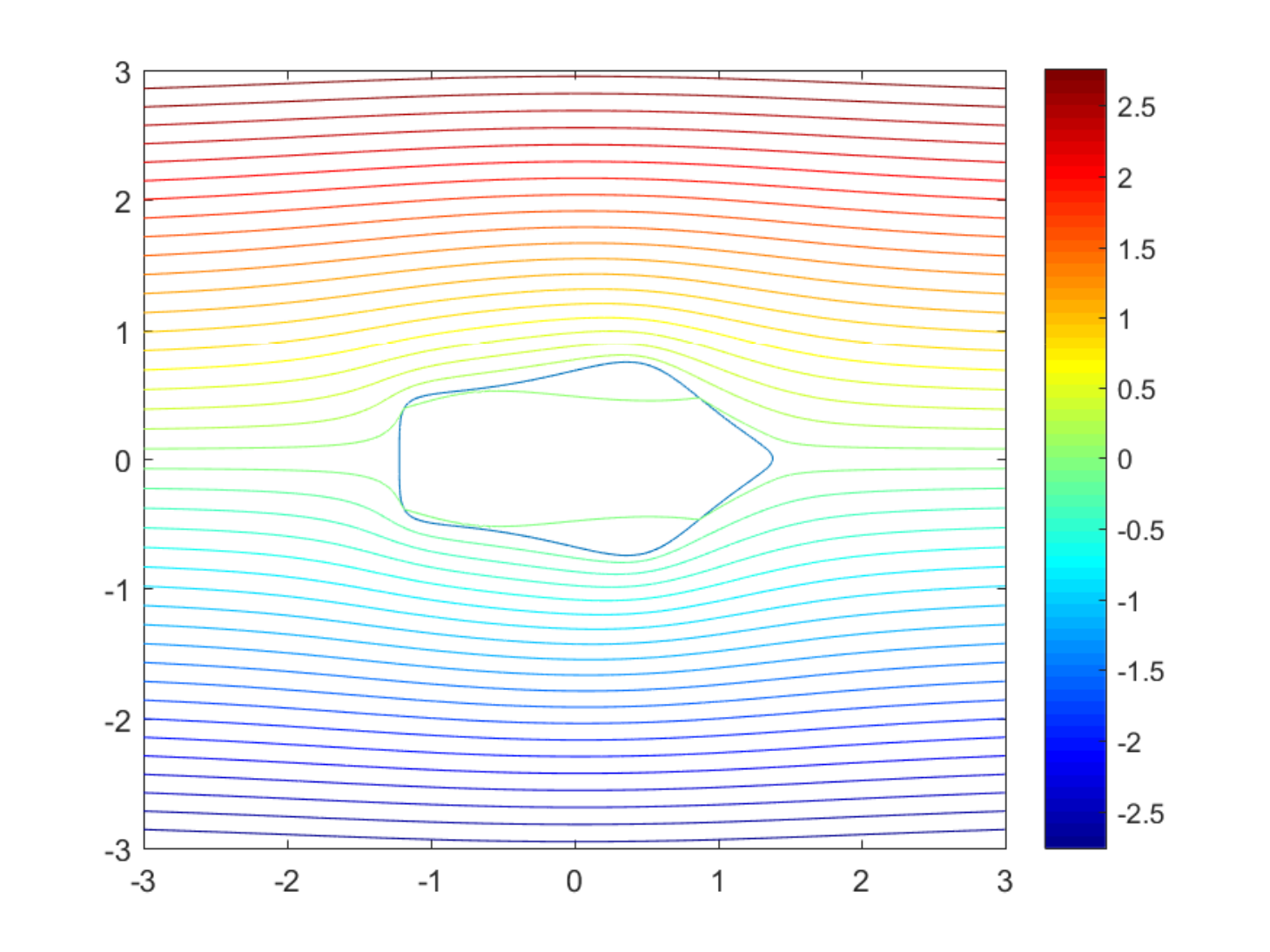}
   \caption{Level curves of the solution to the problem \eqnref{cond_eqn0} with the conditions in Example 1 (left) and Example 2 (right).
   }\label{fig:transmissionsolution}
 \end{figure}


\subsection{Numerical examples of the NP operator spectrum computation}

We give numerical examples for the NP operators of a smooth domain.
\smallskip
 \smallskip

\noindent{\textbf{Example 1.}}
Figure \ref{fig:example1} shows the eigenvalues of $\Kcal_{\p\Om}^*$ of a smooth domain $\Om$. The eigenvalues are computed by projecting the operator to $H_{100}$ space. The eigenvalues calculated using $[\Kcal^*_{\p\Om}]_N$ with various $N$ are given in Table \ref{table:example1}.
\begin{figure}[!h]
\includegraphics[height=5.5cm, width=7cm, trim={0.7cm 0.5cm 0.7cm 0.5cm}, clip]{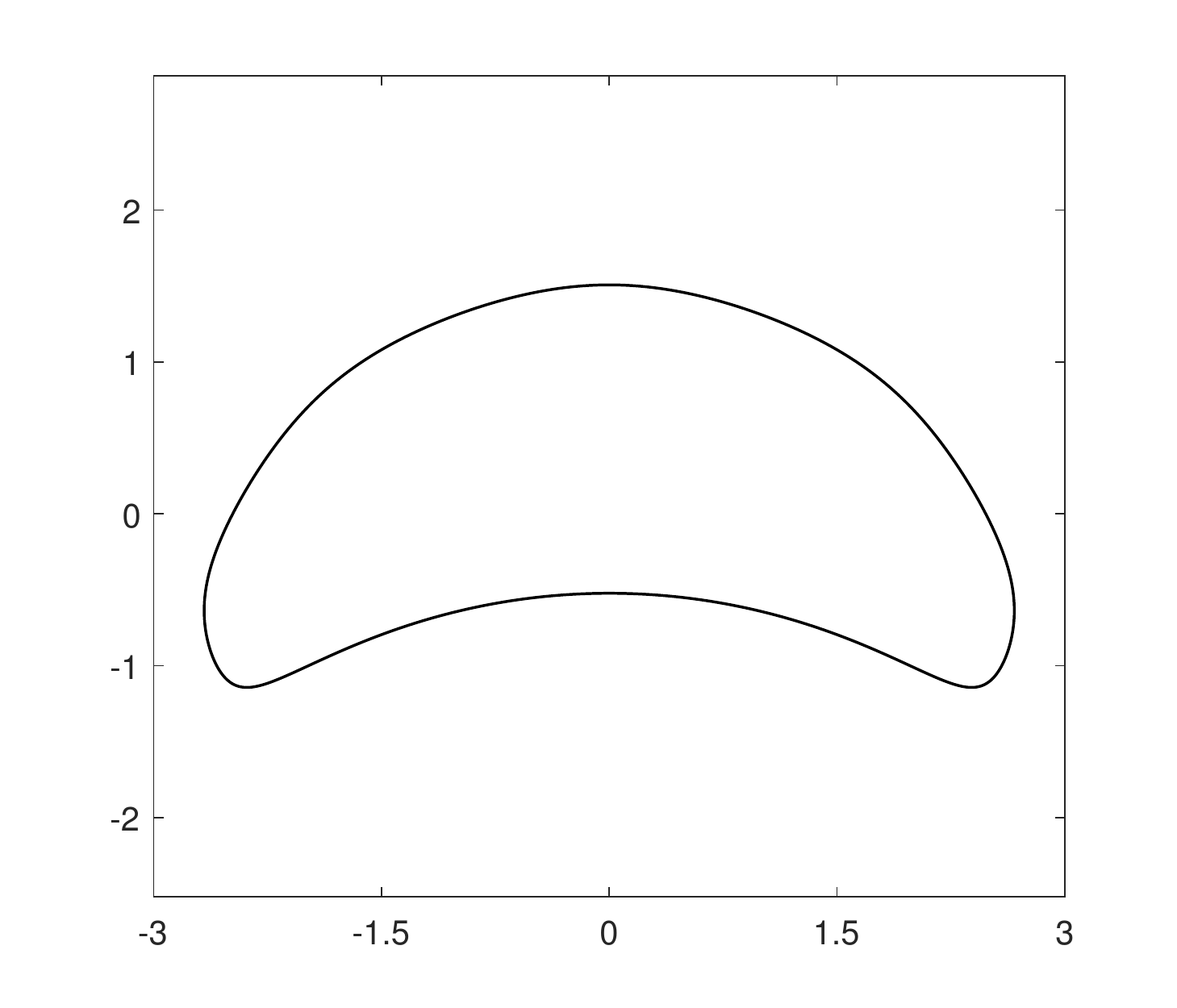}
\hskip .2cm
 \includegraphics[height=5.5cm, width=7cm, trim={0.7cm 0.5cm 0.7cm 0.5cm}, clip]{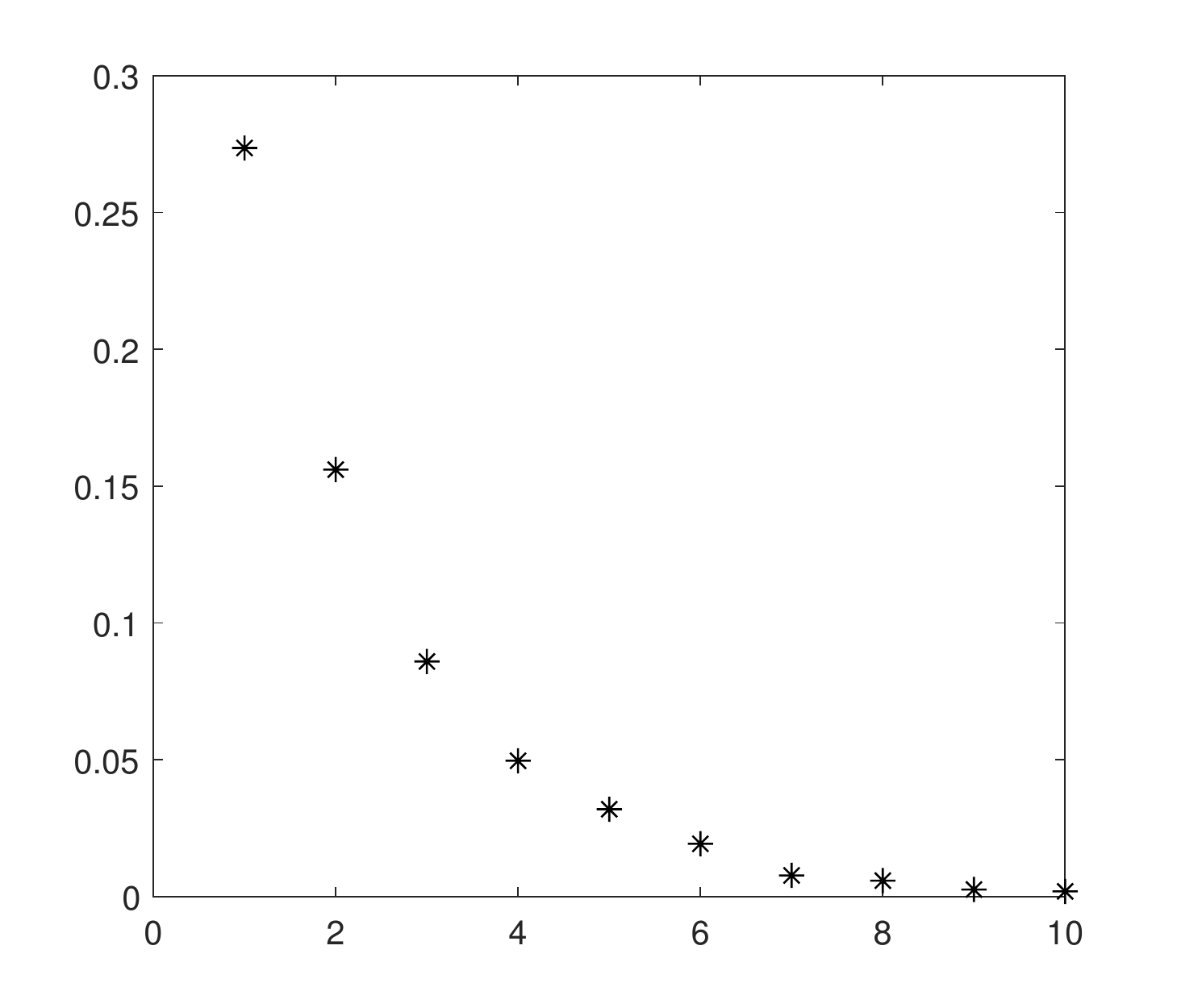}
   \caption{Eigenvalues of a smooth domain $\Om$. The domain is given by the conformal mapping $\Psi(z)=z+\frac{1.5}{z}-\frac{1.5 i}{z^2}-\frac{1.5}{z^3}+\frac{1.5 i}{z^4}+\frac{1.5}{z^5}-\frac{1.5i}{z^6}$ with $\gamma=2$. The left figure shows the geometry of $\Om$, and the right figure is the graph of the eigenvalues $\lambda_n^+$ values against $n$. The values are computed using $[\KstarOmega]_{100}$.
   }\label{fig:example1}
 \end{figure}
\begin{table}[!htbp]
\begin{center}
    \begin{tabular}{| c | c | c | c |}
    \hline
   &  $[\KstarOmega]_{10}$ & $[\KstarOmega]_{25}$ & $[\KstarOmega]_{100}$ \\ 
    \hline\hline
    $\lambda_1^+$& 0.273558129190339 & 0.273558172995812 &  0.273558172996823\\ 
    \hline
    $\lambda_2^+$& 0.156056092355289  & 0.156056664309330 & 0.156056664318575 \\ 
    \hline
    $\lambda_3^+$& 0.0859247988176952  & 0.0859480356185409 & 0.0859480356609182\\
    \hline
    $\lambda_4^+$& 0.0494465179065547  & 0.0496590062381220 & 0.0496590063829059 \\
    \hline
    $\lambda_5^+$& 0.0309734543945057  & 0.0319544507137494 & 0.0319544514943216\\
    \hline
    $\lambda_6^+$& 0.0131127593266966  & 0.0193300891854449 & 0.0193300897106055\\
    \hline
    $\lambda_7^+$& 0.00194937208308878  & 0.00776576627656122 & 0.00776578032772436 \\
    \hline
    $\lambda_8^+$& 0.000974556412340587  & 0.00585359278858573 & 0.00585361352314610\\
    \hline
    $\lambda_9^+$& 0.000178694695157472  & 0.00262352270034986 & 0.00262389455625349\\
    \hline
    $\lambda_{10}^+$& 0.000118463567811146  &0.00194010061907249 & 0.00194031822065751\\
    \hline    \end{tabular}
     \caption{Eigenvalues of $[\Kcal_{\p\Om}^*]_N$, $N=10,25,100$, for $\Om$ given in Figure \ref{fig:example1}.}\label{table:example1}
\end{center}
\end{table}

\section{Conclusion}

We defined the density basis functions whose layer potentials have exact representation in terms of the Faber polynomials and the Grunsky coefficients. These density basis functions give rise to the two Hilbert spaces $K^{\pm1/2}$ which are equivalent to the trace spaces $H^{\pm1/2}$ when the boundary is smooth.  
On these spaces the Neumann-Poincar\'{e} operators are identical to doubly infinite, self-adjoint matrix operators.
Our result provides a new symmetrization scheme for the Neumann-Poincar\'{e} operators different from Plemelj's symmetrization principle.
We emphasize that $\Kcal_{\p\Om}$ and $\Kcal_{\p\Om}^*$ are actually identical to the same matrix and, furthermore, the matrix formulation gives us a simple method of eigenvalue computation.
Since our approach requires the exterior conformal mapping coefficients to be known, we derived a simple integral expression for the exterior conformal mapping coefficients. Numerical results show successful computation of conformal mapping and eigenvalues of the Neumann-Poincar\'{e} operators. 
The present work provides a novel framework for the conductivity transmission problem.


\begin{appendices}
\section{Boundary behavior of conformal maps}\label{appen:boundarybehavior}
In this section we review regularity results on the interior conformal mappings that are provided in \cite{Pommerenke:1992:BBC}. We then derive the regularity for the exterior conformal mapping; see Lemma \ref{lemma:regularity}.

We say that a Jordan curve $C$ is of class $\mathcal{C}^m$ if it has a parametrization $w(t),0\leq t\leq 2\pi,$ that is $m$-times continuously differentiable and satisfies $w'(t)\neq0$ for all $t$. It is of class $\mathcal{C}^{m,\alpha}\;(0<\alpha<1) $ if it furthermore satisfies
\begin{equation*}
	\left|w^{(m)}(t_1)-w^{(m)}(t_2)\right|\leq M|t_1-t_2|^{\alpha}\quad\mbox{for  }t_1,t_2\in[0,2\pi].
\end{equation*}
\begin{theorem}{\rm(Kellogg-Warschawski theorem \cite[Theorem 3.6]{Pommerenke:1992:BBC})} 
	Let $f$ map $\mathbb{D}$ conformally onto the inner domain of the Jordan curve $C$ of class $\mathcal{C}^{m,\alpha}$ where $m\in\NN$ and $0<\alpha<1$. Then $f^{(m)}$ has a continuous extension to $\overline{\mathbb{D}}$ and the extension satisfies
\begin{equation*}
		\left|f^{(m)}(z_1)-f^{(m)}(z_2)\right|\leq M|z_1-z_2|^\alpha~\text{ for } z_1,z_2\in\overline{D}.
	\end{equation*}
\end{theorem}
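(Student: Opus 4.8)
The plan is to pass from $f$ to $g = \log f'$, which is analytic on $\mathbb{D}$ because a conformal map has $f' \neq 0$, and to deduce the regularity of $f$ from that of $g$ via one integration. The geometric input is that, on $|z|=1$, the imaginary part of $g$ is essentially the tangent angle of the boundary curve $C$: parametrizing by $z = e^{i\theta}$ and writing $\beta$ for the angle of the oriented tangent to $C$, one has $\arg f'(e^{i\theta}) = \beta(\theta) - \theta - \pi/2$. Thus the boundary smoothness of $\mathrm{Im}\,g$ is exactly the smoothness of $\theta \mapsto \beta(\theta)$, and the whole problem reduces to transferring smoothness from the imaginary part of an analytic function to its real part, i.e. to the regularity of harmonic conjugation.

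First I would settle the base case $m=1$ (Kellogg's theorem). Since $C$ is of class $\mathcal{C}^{1,\alpha}$, its tangent angle is H\"{o}lder-$\alpha$ in arc length, and I would check that the boundary correspondence is regular enough that $\beta$, as a function of $\theta$, is also $C^\alpha$; this makes $\mathrm{Im}\,g$ a $C^\alpha$ function on the circle. The analytic heart is Privalov's theorem: conjugation maps $C^\alpha$ data on the unit circle to $C^\alpha$ data, and the Poisson/Cauchy integral sends $C^\alpha$ boundary values to a function in $C^\alpha(\overline{\mathbb{D}})$. Applying this to $g$ shows $\mathrm{Re}\,g = \log|f'|$ is $C^\alpha$ as well, so $g \in C^\alpha(\overline{\mathbb{D}})$, and therefore $f' = e^{g} \in C^\alpha(\overline{\mathbb{D}})$ with $f'$ bounded away from $0$ on the closure; integrating gives $f \in C^{1,\alpha}(\overline{\mathbb{D}})$.

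For general $m$ I would bootstrap by induction on $1 \le k \le m$, the hypothesis being $f \in C^{k,\alpha}(\overline{\mathbb{D}})$. Granting this, the boundary arc length obeys $s'(\theta) = \bigl|\frac{d}{d\theta} f(e^{i\theta})\bigr| \in C^{k-1,\alpha}$, so $s(\theta) \in C^{k,\alpha}$; since $C$ is $\mathcal{C}^{m,\alpha}$ its tangent angle $\beta$ is $C^{m-1,\alpha}$ in arc length, and composing with $s(\theta)$ (legitimate because $k \le m-1$, so the outer factor is at least $C^{k,\alpha}$) keeps $\beta(\theta) \in C^{k,\alpha}$. Hence $\mathrm{Im}\,g = \arg f'$ is $C^{k,\alpha}$ on the circle, and the higher-order Privalov estimate — the Cauchy integral carries $C^{k,\alpha}$ data to $C^{k,\alpha}(\overline{\mathbb{D}})$ — upgrades $g$, and thus $f'$, to $C^{k,\alpha}(\overline{\mathbb{D}})$, i.e. $f \in C^{k+1,\alpha}(\overline{\mathbb{D}})$. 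Iterating from $k=1$ up to $k=m-1$ yields $f \in C^{m,\alpha}(\overline{\mathbb{D}})$, and reading off the top derivative gives the stated H\"{o}lder bound on $f^{(m)}$.

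The main obstacle is the genuine analytic content packaged in the base case and its higher-order version: proving that conjugation preserves $C^{k,\alpha}$ on the circle, equivalently the boundedness of the Hilbert transform on H\"{o}lder classes. A secondary delicate point is justifying the composition step, since the regularity of the correspondence $\theta \mapsto s(\theta)$ is precisely what the induction is building; the order of the bootstrap must therefore be arranged so that at each stage the inner map $s(\theta)$ is already known to lie in the class needed before $\beta \circ s$ is differentiated.
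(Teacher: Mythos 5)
The paper does not prove this statement: it is quoted, with citation, as the Kellogg--Warschawski theorem from \cite[Theorem 3.6]{Pommerenke:1992:BBC}, so there is no internal proof to compare against. Your outline is in fact the standard argument behind the cited result: pass to $g=\log f'$, identify $\operatorname{Im}g$ on the circle with the tangent angle via $\arg f'(e^{i\theta})=\beta(\theta)-\theta-\pi/2$, invoke Privalov's theorem that harmonic conjugation preserves $C^{k,\alpha}$ for $0<\alpha<1$, exponentiate and integrate, and bootstrap in $k$. Those steps, including the composition bookkeeping for $\beta\circ s$, are sound.

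The genuine gap is at the base case, and you half-name it without resolving it. To know that $\beta$ is $C^{\alpha}$ \emph{as a function of} $\theta$ you need the boundary correspondence $\theta\mapsto s(\theta)$ to be Lipschitz (indeed bi-Lipschitz), i.e. $s'(\theta)=|f'(e^{i\theta})|$ bounded above and away from zero; and to apply Privalov to $g$ you must also know that $\operatorname{Im}g=\arg f'$ extends continuously to $\overline{\mathbb{D}}$, so that it really is the Poisson integral of its boundary trace. Neither fact is available a priori when $m=1$: at that stage nothing is known about $f'$ near the boundary, so the phrase ``I would check that the boundary correspondence is regular enough'' is circular --- the regularity of $s$ is exactly what is being proved. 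The standard fix, and the route taken in the cited source, is to first establish the Dini-smooth theorem (\cite[Theorem 3.5]{Pommerenke:1992:BBC}, going back to Lindel\"{o}f and Warschawski): for a Dini-smooth Jordan curve, $f'$ extends continuously to $\overline{\mathbb{D}}$ and is nonvanishing there. Since every $\mathcal{C}^{1,\alpha}$ curve is Dini-smooth (a fact the paper itself records in appendix A), this supplies precisely the a priori input that unlocks your base case, after which your induction runs as written. So your plan follows the right road, but it is incomplete without this preliminary continuity theorem, whose proof (continuity of $\arg f'$ controlled by the Dini modulus) carries real analytic content not subsumed by the Privalov estimate you cite.
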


To state the regularity results on the conformal mapping associated with a domain with corners we need the concept of {\it Dini-contiuity}:
\begin{itemize}
\item
For a function $\phi:[0,2\pi]\rightarrow\CC$ we define the modulus of continuity as
	\begin{equation*}
	\omega(\delta)=\sup\left\{\left|\phi(z_1)-\phi(z_2)\right|~:~ |z_1-z_2|\leq \delta,\ z_1,z_2\in [0,2\pi]\right\},\quad\delta> 0.
	\end{equation*}
The function $\phi$ is called {\it Dini-continuous} if
$
	\int_{0}^{\pi}\frac{\omega(t)}{t}dt<\infty
$.
The end-point of the integration interval $\pi$ could be replaced by any positive number. 
\item
We say that a Jordan curve $C$ is {\it Dini-smooth} if it has a parametrization $w(t),0\leq t\leq 2\pi,$ such that $w'(t)$ is Dini-continuous and $w'(t)\neq 0$ for all $t$. Every $C^{1,\alpha}$ Jordan curve is Dini-smooth.
\end{itemize}

Let $\Om$ be a simply connected domain whose boundary is a Jordan curve. We also let a complex function $S$ map $\mathbb{D}$ conformally onto $\Om$. We allow $\Om$ to have a corner on its boundary.
We say that $\partial \Omega$ has a corner of opening $\pi\beta\;(0\leq\beta\leq 2)$ at $S(\zeta)\in\p\Om,\;\zeta=e^{i\theta},$ if
\begin{equation*}
	\arg\left|S(e^{i t})-S(e^{i \theta})\right|\rightarrow 
	\begin{cases}
	\ds	\eta & \text{ as } t\rightarrow \theta+,\\
	\ds	\eta+\pi\beta & \text{ as } t\rightarrow \theta-.
	\end{cases}
\end{equation*}
If $\beta=1$, then at $S(\zeta)$ we have a tangent vector with direction angle $\beta$.
If $\beta$ is $0$ or $2$, then we have an outward-pointing cusp or an inward-pointing cusp, respectively.

We say that $\partial\Omega$ has a {\it Dini-smooth corner} at $S(\zeta)\in\p\Om$ if there are two closed arcs $A^{\pm}\subset \partial \mathbb{D}$ ending at $\zeta\in\partial\mathbb{D}$ and lying on opposite sides of $\zeta$ that are mapped onto Dini-smooth Jordan curves $C^+$ and $C^-$ forming the angle $\pi\beta$ at $S(\zeta)$.

\begin{theorem}{\rm(\cite[Theorem 3.9]{Pommerenke:1992:BBC})}\label{thm:regularityatboundary}
	If $\partial \Omega$ has a Dini-smooth corner of opening $\pi\beta\;(0<\beta\leq 2)$ at $S(\zeta)\neq \infty$, then the functions
	\begin{equation*}
		\frac{S(z)-S(\zeta)}{(z-\zeta)^\beta}\text{ and } \frac{S'(z)}{(z-\zeta)^{\beta-1}}
	\end{equation*}
	are continuous and $\neq 0,\infty$ in $\mathbb{\overline{D}}\cap D(\zeta,\rho)$ for some $\rho>0.$
\end{theorem}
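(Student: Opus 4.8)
The plan is to reduce the corner case to the Dini-smooth (cornerless) case by straightening the wedge with a fractional-power map, and then to transfer the resulting boundary regularity back through that map. First I would normalize the geometry: after a translation assume $S(\zeta)=0$, and after a rotation assume that the two arcs $C^{\pm}$ issuing from the corner are positioned so that the wedge of opening $\pi\beta$ is symmetric about a fixed ray, say the positive real axis. By the definition of a Dini-smooth corner recalled above, $C^{+}$ and $C^{-}$ are then Dini-smooth Jordan arcs meeting at $0$ with interior angle $\pi\beta$.

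Next I would introduce the corner-opening map $\psi(w)=w^{1/\beta}$, using the branch that is holomorphic on the local slit piece of $\Om$ near $0$ and that carries a wedge of opening $\pi\beta$ onto one of opening $\pi$. The composition $g:=\psi\circ S$ maps a one-sided neighborhood $\mathbb{D}\cap D(\zeta,\rho_0)$ conformally onto a region whose boundary near $g(\zeta)=0$ is the image $\psi(C^{+}\cup C^{-})$. The essential point is that $\psi$ fuses $C^{+}$ and $C^{-}$ into a single arc passing through $0$ with a well-defined tangent, and that this fused arc is again Dini-smooth; this is exactly the content the Dini-smooth corner hypothesis is designed to supply.

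I would then invoke the Dini-smooth boundary regularity result (the Dini-continuous analogue of the Kellogg--Warschawski theorem quoted above, \cite[Theorem 3.5]{Pommerenke:1992:BBC}): since $g$ maps onto a domain whose boundary arc through $g(\zeta)$ is Dini-smooth, its derivative $g'$ extends continuously to $\overline{\mathbb{D}}\cap D(\zeta,\rho)$ for some $\rho>0$ and satisfies $g'\neq 0$ there. Writing $S=\psi^{-1}\circ g=g^{\beta}$ and using $g(\zeta)=0$ together with $g'(\zeta)\neq 0$, I would record the identities
$$\frac{S(z)-S(\zeta)}{(z-\zeta)^{\beta}}=\left(\frac{g(z)}{z-\zeta}\right)^{\beta},\qquad \frac{S'(z)}{(z-\zeta)^{\beta-1}}=\beta\left(\frac{g(z)}{z-\zeta}\right)^{\beta-1}g'(z).$$
Since $g(z)/(z-\zeta)\to g'(\zeta)\neq 0$ as $z\to\zeta$ and $g'$ is continuous and nonvanishing, the right-hand sides extend continuously and stay away from both $0$ and $\infty$ on $\overline{\mathbb{D}}\cap D(\zeta,\rho)$, which is the assertion.

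The step I expect to be the main obstacle is the second one: verifying rigorously that the fractional power $\psi$ is a single-valued branch on the relevant local piece of $\Om$ and, above all, that the straightened boundary $\psi(C^{+}\cup C^{-})$ is genuinely \emph{Dini-smooth} --- with matching one-sided tangents at $0$ --- rather than merely continuous. This is precisely where the notion of a Dini-smooth corner, as opposed to an arbitrary corner of opening $\pi\beta$, becomes indispensable, since one must propagate the Dini-continuity of the boundary parametrization through the power map and across the two arcs.
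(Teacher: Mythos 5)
The paper offers no proof of this statement at all---it is quoted directly from Pommerenke \cite[Theorem 3.9]{Pommerenke:1992:BBC}---and your argument (straighten the corner with the fractional power $w\mapsto w^{1/\beta}$, apply the Dini-smooth Kellogg--Warschawski-type theorem \cite[Theorem 3.5]{Pommerenke:1992:BBC} to $g=\psi\circ S$, then transfer back through $S=g^{\beta}$) is precisely the canonical proof given in that source. The two technical points you flag---single-valuedness of the branch on the local piece, Dini-smoothness of the fused arc $\psi(C^{+}\cup C^{-})$ after reparametrization, plus the implicit need for a localized version of Theorem 3.5 since only the boundary arc through $g(\zeta)$ is known to be Dini-smooth---are exactly the steps Pommerenke carries out in detail, so they are standard rather than defects of the approach.
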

\smallskip

\begin{lemma}[Boundary behavior of exterior conformal mapping]\label{lemma:regularity}
If $\partial \Omega$ has a Dini-smooth corner at $\Psi(w_0)$ of exterior angle $\pi\alpha \,(0<\alpha<2)$, then the functions
$$\frac{\Psi(w)-\Psi(w_0)}{(w-w_0)^\alpha}\quad\mbox{and}\quad\frac{\Psi'(w)}{(w-w_0)^{\alpha-1}}$$
are continuous and $\neq 0,\infty$ in $(\mathbb{C}\setminus\mathbb{D})\cap D(w_0;\delta)$ for some $\delta>0$, where $D(w_0;\delta)$ denotes the disk centered at $w_0$ with radius $\delta$.
\end{lemma}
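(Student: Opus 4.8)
The plan is to reduce the exterior statement to the interior Kellogg--Warschawski/Pommerenke result already recorded as Theorem \ref{thm:regularityatboundary}, by composing $\Psi$ with two auxiliary M\"obius-type maps that turn the unbounded exterior conformal map into a bounded interior one. Fix a point $z_0\in\Om$ and set $T(z)=1/(z-z_0)$. Since $z_0\notin\CC\setminus\overline{\Om}$, the map $T$ sends the unbounded domain $\CC\setminus\overline{\Om}$ conformally onto a bounded simply connected domain $\Om^{*}:=T(\CC\setminus\overline{\Om})$, with $\infty\mapsto 0$. On the parameter side, the inversion $\zeta\mapsto\gamma/\zeta$ maps $\DD$ onto $\{|w|>\gamma\}$ (sending $0\mapsto\infty$). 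I would then form the composite $S(\zeta):=T\bigl(\Psi(\gamma/\zeta)\bigr)$. Using the Laurent expansion \eqnref{conformal:Psi} one checks that $S$ extends analytically across $\zeta=0$ with $S(0)=0$ and $S'(0)=1/\gamma\neq0$, so $S\colon\DD\to\Om^{*}$ is an interior conformal map.

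First I would verify the angle bookkeeping. Writing $w_0=\gamma e^{i\theta_0}$ and $\zeta_0:=\gamma/w_0\in\p\DD$, we have $S(\zeta_0)=T(\Psi(w_0))$. Because the interior and exterior openings at a boundary point sum to $2\pi$, the exterior angle $\pi\alpha$ of $\Om$ at $\Psi(w_0)$ equals the interior opening of $\Om^{*}$ at $T(\Psi(w_0))$: the region occupied by $\Om^{*}$ near $T(\Psi(w_0))$ is exactly the conformal image of the exterior sector of $\Om$, and $T$, being analytic and non-degenerate near $\Psi(w_0)\neq z_0$, preserves the opening. The same two local analytic maps carry the two Dini-smooth arcs forming the corner to Dini-smooth arcs, so $\p\Om^{*}$ has a Dini-smooth corner of opening $\pi\alpha$ at $S(\zeta_0)$. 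Hence Theorem \ref{thm:regularityatboundary} applies to $S$ with $\beta=\alpha$, giving that
$$\frac{S(\zeta)-S(\zeta_0)}{(\zeta-\zeta_0)^{\alpha}}\quad\text{and}\quad\frac{S'(\zeta)}{(\zeta-\zeta_0)^{\alpha-1}}$$
are continuous and $\neq0,\infty$ on $\overline{\DD}\cap D(\zeta_0;\rho)$ for some $\rho>0$.

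It remains to transfer these conclusions back to $\Psi$. Inverting the composite gives $\Psi(w)=z_0+1/S(\gamma/w)$, and with $\zeta=\gamma/w$ one has the elementary identity $\zeta-\zeta_0=-\gamma(w-w_0)/(ww_0)$. I would then write, for $w$ near $w_0$ (so $\zeta$ near $\zeta_0$),
$$\frac{\Psi(w)-\Psi(w_0)}{(w-w_0)^{\alpha}}=-\frac{1}{S(\zeta)S(\zeta_0)}\cdot\frac{S(\zeta)-S(\zeta_0)}{(\zeta-\zeta_0)^{\alpha}}\cdot\Bigl(\frac{-\gamma}{ww_0}\Bigr)^{\alpha},$$
and, using $\Psi'(w)=\tfrac{\gamma}{w^{2}}\,S'(\zeta)/S(\zeta)^{2}$,
$$\frac{\Psi'(w)}{(w-w_0)^{\alpha-1}}=\frac{\gamma}{w^{2}S(\zeta)^{2}}\cdot\frac{S'(\zeta)}{(\zeta-\zeta_0)^{\alpha-1}}\cdot\Bigl(\frac{-\gamma}{ww_0}\Bigr)^{\alpha-1}.$$
On the right-hand sides the middle factors are continuous and $\neq0,\infty$ by the previous step; the prefactors $1/(S(\zeta)S(\zeta_0))$, $\gamma/(w^{2}S(\zeta)^{2})$ and the powers of $-\gamma/(ww_0)$ are continuous and non-vanishing near $w_0$, because $S(\zeta_0)=T(\Psi(w_0))\neq0,\infty$ and $w_0\neq0$. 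Choosing $\delta>0$ so small that $\zeta=\gamma/w$ maps $(\CC\setminus\DD)\cap D(w_0;\delta)$ into $\overline{\DD}\cap D(\zeta_0;\rho)$ then yields the claim.

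I expect the main obstacle to be the angle and regularity bookkeeping of the second paragraph---precisely, justifying that the exterior opening of $\Om$ becomes the interior opening of $\Om^{*}$ and that both the M\"obius map $T$ and the inversion preserve Dini-smoothness of the corner arcs, so that Pommerenke's hypothesis is met with $\beta=\alpha$. Once the corner is correctly set up on the interior side, the back-substitution in the last paragraph is purely algebraic, the only care being a consistent choice of branch for the fractional powers $(\cdot)^{\alpha}$ and $(\cdot)^{\alpha-1}$ compatible with the branch used in Theorem \ref{thm:regularityatboundary}.
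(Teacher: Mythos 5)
Your proof is correct and takes essentially the same route as the paper: both reduce the exterior statement to the interior result (Theorem \ref{thm:regularityatboundary}) by conjugating $\Psi$ with the inversion $z\mapsto 1/(z-z_0)$ about a point $z_0\in\Omega$, and then transfer the conclusion back using that this map is conformal away from $0$ and $\infty$. Your write-up simply makes explicit what the paper leaves implicit---the parameter-side inversion $\zeta\mapsto\gamma/\zeta$, the analytic extension of $S$ across $\zeta=0$, the angle and Dini-smoothness bookkeeping, and the branch-consistent back-substitution identities.
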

\begin{proof}
We apply Theorem \ref{thm:regularityatboundary} to see the boundary behavior of $\Psi$ at the corner points.
Set $G$ to be the reflection of $\Om$ with respect to a circle centered at some point $z_0\in\Om$. Then for a conformal mapping $f$ from $\mathbb{D}$ onto $G$ we have $\Psi(w)=1/f(\frac{1}{w-z_0}+z_0)$. 
If $\p \Om$ has a corner at $\Psi(w_0)$ of exterior angle $\pi\alpha$, then $\p G$ has a corner at the corresponding point $f(z_0)$ of opening $\pi \alpha$. Since $z\rightarrow\frac{1}{z}$ is a conformal mapping from $\CC\setminus\{0\}$ onto $\CC\setminus\{0\}$, $f$ and $\Psi$ have the same regularity behavior at the corresponding corner points. From Theorem \ref{thm:regularityatboundary}, we complete the proof.
\end{proof}

\section{The Faber polynomials}\label{section:appdixFaber}
Substituting equation \eqref{conformal:Psi} into equation \eqref{eqn:Fabergenerating}, we obtain the recursion relation
\begin{equation}\label{eqn:Faberrecursion}
	-na_n=F_{n+1}(z)+\sum_{s=0}^{n}a_{s}F_{n-s}(z)-zF_n(z),\quad n\geq 0.
\end{equation}
with the initial condition $F_0(z)=1.$
The first three polynomials are
$$F_0(z)=1,\quad F_1(z)=z-a_0,\quad F_2(z)=z^2-2a_0 z+(a_0^2-2a_1).$$
Multiplying both sides of equation \eqref{eqn:Fabergenerating} by $w^{n}$ and integrating on the contour $|w|=R$, we have 
\begin{equation}\label{eqn:FabervalCauchyintegral}
	F_n(z)=\frac{1}{2\pi i}\int_{|w|=R}\frac{w^n\Psi'(w)}{\Psi(w)-z} dw,\quad z\in\overline{\Omega_r},~\gamma\leq r<R<\infty.
\end{equation}

Now let $z\in \mathbb{C}\setminus\Omega$ and consider the function
$$W(z,w) = \frac{\Psi'(w)}{\Psi(w)-z}=\frac{\Psi'(w)}{\Psi(w)-\Psi(r)},\quad r=\Psi^{-1}(z).$$
$W(z,w)$ is defined for $|w|>\gamma$ and $|r|>\gamma$ and has a only singularity at $w=r$. After considering the residue at the simple pole at $w=r$ for fixed $r$ and the simple pole at $r=w$ for fixed $w$, we see that
\begin{equation}\label{eqn:expressionPsiwr}
	\frac{\Psi'(w)}{\Psi(w)-\Psi(r)}=\frac{1}{w-r}+M(w,r)	
\end{equation}
and  $M(w,r)$ is analytic for $|w|>\gamma$ and $|r|>\gamma.$
Expanding $M(w,r)$ in double-power series and collecting the terms of the same degree with respect to $w$ we find
\begin{align}\label{eqn:expressionMwr}
	M(w,r) = a_0(r)+a_1(r)\frac{1}{w}+a_2(r)\frac{1}{w^{2}}+\cdots,\quad |w|>\gamma,~|r|>\gamma.
\end{align}
Letting $r\neq \infty$ and $w=\infty$ in the equation \eqnref{eqn:expressionPsiwr} and \eqnref{eqn:expressionMwr}
, we observe that $a_0(r)=a_1(r)=0.$ Similarly, for $r=\infty$ and $w\neq \infty$ we see that $a_k(\infty)=0,~k=2,3,\dots$. Considering the Laurent expansions of $a_k(w),~k=2,3,\dots$, one can show that the series expansion
\begin{equation}\label{eqn:Faber}
	\frac{\Psi'(w)}{\Psi(w)-\Psi(r)}-\frac{1}{w-r}=\sum_{m=1}^\infty\sum_{k=1}^\infty c_{m,k}r^{-k}w^{-m-1}
\end{equation}
holds for $|r|>\gamma$ and $|w|>\gamma$.
From \eqref{eqn:FabervalCauchyintegral}, and \eqref{eqn:Faber} we immediately observe the following relation:
\begin{equation}\label{eqn:Faberdef}
	F_m(\Psi(r))-r^m
	=\sum_{k=1}^{\infty}c_{m,k}{r^{-k}},\quad m=1,2,\dots.
\end{equation}

The Faber polynomials associated with $\Omega$ form a basis for analytic functions in $\Omega$. From the Cauchy integral formula and \eqref{eqn:Fabergenerating}, one can easily derive the following:
any complex function $f(z)$ that is analytic in the bounded domain enclosed by the curve $\{\Psi(\zeta):|\zeta|=R\}$, $R>\gamma$, admits the series expansion 
\begin{equation}\label{faberseries}
	f(z)=\sum_{m=0}^{\infty}\alpha_m F_m(z)\quad\mbox{in }\overline{\Omega}
\end{equation}
with
\begin{equation}\notag
	\alpha_m = \frac{1}{2\pi i}\int_{|w|=r}\frac{f(\Psi(w))}{w^{m+1}}\, dw,\quad \gamma<r<R.
\end{equation}

\section{Properties of the Grunsky coefficients}\label{section:Grunsky}
The Grunsky coefficients $c_{m,k}$'s can be directly computed from the coefficients of the exterior conformal mapping $\Psi$ via the recursion formula
\begin{equation}\label{eqn:cnkrecursion}
	c_{m,k+1}=c_{m+1,k}-a_{m+k}+\sum_{s=1}^{m-1}a_{m-s}c_{s,k}-\sum_{s=1}^{k-1}a_{k-s}c_{m,s},\quad m,k\geq 1,
\end{equation}
	with the initial condition $c_{n,1}=na_n$ for all $n\geq1$. We set $\sum_{s=1}^{0}=0$.
Indeed, the relation \eqref{eqn:cnkrecursion} can be easily derived by substituting \eqref{eqn:Faberdef} into \eqref{eqn:Faberrecursion} and comparing terms $r^{-k}$ of the same order. 

Applying the Cauchy integral formula on $\Omega_r$, $r>\gamma$ and \eqref{eqn:Faberdef} we derive
\begin{align*}
 	0&=\frac{1}{2\pi i}\int_{\Omega_r}F_n(z)F'_m(z)dz\\
 	&=\frac{1}{2\pi i}\int_{|w|=r}F_n(\Psi(w))F'_m(\Psi(w))\Psi'(w)dw=mc_{n,m}-nc_{m,n}.
 \end{align*} 
This identity implies the Grusnky identity
\begin{equation}
	mc_{n,m}=nc_{m,n}\quad m,n=1,2,\cdots
\end{equation}

We now review the polynomial area theorem and the Grunsky inequalities. More details can be found in \cite{Duren:1983:UF}.

The polynomial area theorem states the following:
Let $P(z)$ be an arbitrary non-constant polynomial of degree $N$ that admits the expansion
$$P(\Psi(w))=\sum_{k=-N}^{\infty}b_kw^{-k},\quad|w|>\gamma.$$
Then
	\begin{equation}\label{area}
		\sum_{k=1}^{\infty}k\left|{b_k}{\gamma^{-k}}\right|^2\leq \sum_{k=1}^{N}k\left|{b_{-k} \gamma^k} \right|^2
	\end{equation}
	with equality if and only if $\Omega$ has the measure zero.
In fact, this relation is a result of a complex form of Green's theorem: Let $f(u+iv)$ be $C^1(\overline{\Om})$, then
$$\iint_\Om \pd{f}{\bar{z}}dudv=\frac{1}{2i}\int_{\p \Om}f(z)dz,\quad z=u+iv.$$
We can easily prove the inequality \eqnref{area} by setting $f(z)=\overline{P(z)}P'(z)$:
\begin{align*}
0&\leq\int_\Om \overline{P'(z)}P'(z)dudv\\
&=\int_\Om\pd{}{\bar{z}}\left(\overline{P(z)}P'(z)\right)dudv
=\frac{1}{2i}\int_{\p\Om}\overline{P(z)}P'(z)dz\\
&=\frac{1}{2i}\int_{|w|=\gamma}\overline{P({\Psi(w)}}P'(\Psi(w))\Psi'(w)dw
=-\pi\sum_{k=-N}^\infty k|b_k|^2\gamma^{-2k}.
\end{align*}

One can derive a system of inequalities that are known as the Grunsky inequalities by applying the polynomial area theorem to $P(z)=\sum_{n=1}^{N}\frac{\lambda_n}{\gamma^n}F_n(z)$ for some complex numbers $\lambda_1,\dots,\lambda_N$; see \cite{Grunsky:1939:KSA} and \cite{Duren:1983:UF}. 

\begin{lemma}[Strong Grunsky inequalities]\label{lemma:strongGrunsky}
Let $N$ be a positive integer and $\lambda_1,\lambda_2,\dots,\lambda_N$ be complex numbers that are not all zero. Then, we have
	\begin{equation*}
		\sum_{k=1}^{\infty}k\left|\sum_{n=1}^{N}\frac{c_{n,k}}{\gamma^{n+k}}\lambda_n\right|^2\leq\sum_{n=1}^{N}n|\lambda_n|^2.
	\end{equation*}
		  Strict inequality holds unless $\Omega$ has measure zero.
	 \end{lemma}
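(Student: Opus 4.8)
The plan is to specialize the polynomial area theorem, inequality \eqnref{area}, to the polynomial suggested just above the statement, namely
$$P(z)=\sum_{n=1}^{N}\frac{\lambda_n}{\gamma^n}F_n(z).$$
First I would observe that, since the $\lambda_n$ are not all zero and the Faber polynomials $F_n$ are monic of pairwise distinct degrees $n=1,\dots,N$, the leading term cannot cancel, so $P$ is a genuine non-constant polynomial and the hypotheses of the area theorem are met.

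The core computation is to read off the Laurent coefficients of $P(\Psi(w))$ on $|w|>\gamma$. Using the exterior expansion \eqnref{eqn:Faberdefinition}, that is $F_n(\Psi(w))=w^n+\sum_{k=1}^{\infty}c_{n,k}w^{-k}$, I would substitute and collect powers of $w$ to get
$$P(\Psi(w))=\sum_{n=1}^{N}\frac{\lambda_n}{\gamma^n}w^n+\sum_{k=1}^{\infty}\left(\sum_{n=1}^{N}\frac{\lambda_n}{\gamma^n}c_{n,k}\right)w^{-k}.$$
Matching with the normalization $P(\Psi(w))=\sum_{k=-N}^{\infty}b_k w^{-k}$ used in the area theorem then identifies the principal-part coefficients $b_{-n}=\lambda_n/\gamma^n$ for $n=1,\dots,N$ and the decaying coefficients $b_k=\sum_{n=1}^{N}\frac{\lambda_n}{\gamma^n}c_{n,k}$ for $k\geq1$; note in particular that the constant term $b_0$ vanishes, consistent with the derivation of \eqnref{area}.

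With these coefficients in hand, the inequality \eqnref{area} becomes the claimed bound after routine simplification. The right-hand side is $\sum_{k=1}^{N}k\,|b_{-k}\gamma^k|^2=\sum_{n=1}^{N}n\,|\lambda_n|^2$, since the factors $\gamma^{\pm n}$ cancel, while the left-hand side is $\sum_{k=1}^{\infty}k\,|b_k\gamma^{-k}|^2=\sum_{k=1}^{\infty}k\,\big|\sum_{n=1}^{N}\frac{c_{n,k}}{\gamma^{n+k}}\lambda_n\big|^2$, which is exactly the quantity in the statement. Finally, the strict-inequality assertion is inherited directly from the equality clause of the area theorem: equality in \eqnref{area} forces $\int_{\Om}|P'|^2\,du\,dv=0$, which for the non-constant $P$ can occur only if $\Om$ has measure zero.

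As for difficulty, there is essentially no obstacle here — the argument is a direct specialization of the already-established area theorem. The only points requiring care are the bookkeeping of which powers of $w$ belong to the principal part (the $b_{-n}$) versus the regular part (the $b_k$, $k\geq1$), and the verification that the $\gamma$-powers cancel cleanly on the right-hand side; both become mechanical once the coefficient identification above is recorded.
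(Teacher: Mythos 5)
Your proposal is correct and follows exactly the paper's route: the paper derives Lemma \ref{lemma:strongGrunsky} by applying the polynomial area theorem \eqnref{area} to the very same polynomial $P(z)=\sum_{n=1}^{N}\frac{\lambda_n}{\gamma^n}F_n(z)$, with the coefficient identification $b_{-n}=\lambda_n/\gamma^n$ and $b_k=\sum_{n=1}^{N}\lambda_n c_{n,k}/\gamma^{n}$ that you record, and the strict-inequality clause inherited from the equality case of \eqnref{area}. Your bookkeeping of the principal versus decaying Laurent coefficients and the cancellation of the $\gamma$-powers is accurate, so nothing is missing.
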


	From the Grunsky inequalities we can derive an important bound for the Grunsky coefficients as follows. Choose some $1\leq m\leq N$ and let $\lambda_k = \delta_{mk}/\sqrt{m},~1\leq k\leq N$ in the strong Grunsky inequality. Then it holds that
\beq\label{muineq1}
\sum_{k=1}^\infty\left|\sqrt{\frac{k}{m}}\frac{c_{m,k}}{\gamma^{m+k}}\right|^2\leq 1.
\eeq

\section{Convergence of the finite section method}
We briefly introduce the convergence conditions for the finite section method. For more details we refer the reader to \cite{Gohberg:2003:BCL, Kress:2014:LIE}.

Let $H$ be a separable complex Hilbert space and $\mathcal{B}(H)$ be the linear space of bounded linear operators on $H$. We let $H_n$ be an increasing sequence of finite-dimensional subspaces of $H$ such that the union of $H_n$ is dense in $H$. We let $P_n$ be the orthogonal projection of $H$ onto $H_n$. Thus, $\|P_n\|=1$ for each $n$ and $P_n x\rightarrow x$ for every $x\in\mathcal{H}$.

For an operator $A\in \mathcal{B}(H)$ that is invertible, we say the projection method for $Ax=y$ {\it converges }if there exists an integer $N$ such that for each $y\in H$ and $n\geq N$, there exists a solution $x_n$ in $H_n$ to the equation $P_n AP_n x_n = P_n y$, which is unique in $H_n$, and the sequence $\{x_n\}$ converges to $A^{-1}y$.  

One can easily derive the following proposition and the corollary.
\begin{prop}
Let $A\in\mathcal{B}(H)$ be invertible. Then the projection method for $Ax=y$ converges if and only if there is an integer $N$ such that for $n\geq N$, the restriction of the operator $P_nAP_n$ on $H_n$ has a bounded inverse, denoted by $(P_nAP_n)^{-1}$, and
\begin{equation*}
	\sup_{n\geq{N}}\|(P_nAP_n)^{-1}\|< \infty.
\end{equation*}
\end{prop}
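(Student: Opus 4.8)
The plan is to prove the two implications separately, writing $A_n := P_n A P_n|_{H_n}$ for the compression of $A$ regarded as a linear map of the finite-dimensional space $H_n$ into itself. Because $H_n$ is finite-dimensional, the phrase ``has a bounded inverse'' carries no content beyond bijectivity of $A_n$; the substantive part of the statement is therefore the uniform estimate $\sup_{n\ge N}\|A_n^{-1}\|<\infty$.

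For the sufficiency direction ($\Leftarrow$), suppose $A_n$ is invertible for $n\ge N$ with $C:=\sup_{n\ge N}\|A_n^{-1}\|<\infty$. Fixing $y\in H$ and setting $x:=A^{-1}y$, so that $y=Ax$, the unique solution in $H_n$ is $x_n=A_n^{-1}P_n y$, and since $A_n z=P_nAz$ for $z\in H_n$ I would derive the identity
\[
A_n\bigl(x_n-P_nx\bigr)=P_n y-P_nAP_nx=P_nA(I-P_n)x,
\]
where the second equality uses $P_n y=P_nAx$. This gives $\|x_n-P_nx\|\le C\,\|A\|\,\|(I-P_n)x\|$. As $P_nx\to x$, the right-hand side tends to $0$, and combining with $P_nx\to x$ yields $x_n\to x=A^{-1}y$, i.e.\ convergence of the projection method with the same $N$.

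For the necessity direction ($\Rightarrow$), assume the projection method converges with threshold $N$. For $n\ge N$, existence and uniqueness of a solution to $A_n x_n=P_n y$ for every $y$, together with the fact that $P_n$ maps $H$ onto $H_n$, forces $A_n$ to be a bijection of $H_n$, hence invertible. For the uniform bound I would introduce the operators $B_n:=A_n^{-1}P_n\in\mathcal{B}(H)$ for $n\ge N$; the hypothesis says precisely that $B_n y=x_n\to A^{-1}y$ for each $y$, so every orbit $\{B_n y\}_{n\ge N}$ is convergent and hence bounded. Restricting to $H_n$, where $P_n$ acts as the identity, gives $B_n z=A_n^{-1}z$, so that $\|A_n^{-1}\|\le\sup_{n\ge N}\|B_n\|$.

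The main obstacle is the last step of the necessity argument: passing from pointwise boundedness of the $B_n$ to a uniform operator bound. This is exactly the content of the Banach--Steinhaus (uniform boundedness) theorem applied to the family $\{B_n\}_{n\ge N}$ on the complete space $H$; it yields $\sup_{n\ge N}\|B_n\|<\infty$ and thereby the desired $\sup_{n\ge N}\|A_n^{-1}\|<\infty$. The sufficiency direction, by contrast, is a direct estimate requiring only $\|P_n\|=1$ and the strong convergence $P_nx\to x$.
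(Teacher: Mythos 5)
Your proof is correct: the paper itself omits the argument (stating only that ``one can easily derive'' the proposition and pointing to the cited references of Gohberg--Goldberg--Kaashoek and Kress), and your two implications --- the direct estimate via the identity $A_n(x_n-P_nx)=P_nA(I-P_n)x$ together with $\|P_n\|=1$ and $P_nx\to x$ for sufficiency, and the Banach--Steinhaus theorem applied to $B_n=A_n^{-1}P_n$ for necessity, with invertibility of $A_n$ on the finite-dimensional $H_n$ reduced to bijectivity --- constitute exactly the standard argument found in those references. There is nothing to correct.
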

\begin{cor}\label{cor:finitesection}
	 If $A\in\mathcal{B}(\mathcal{H})$ is invertible and satisfies $\|I-A\|<1$, then the projection method for $A$ converges.
\end{cor}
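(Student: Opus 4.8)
The plan is to deduce the corollary directly from the preceding Proposition, so it suffices to exhibit an integer $N$ such that the restriction of $P_nAP_n$ to $H_n$ is boundedly invertible for every $n\geq N$, with the inverses uniformly bounded in operator norm. The hypothesis $\|I-A\|<1$ will in fact let us take $N=1$ and obtain a bound that is independent of $n$.

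First I would set $q=\|I-A\|<1$ and, for each $n$, regard $B_n:=\left.P_nAP_n\right|_{H_n}$ as an operator on the finite-dimensional Hilbert space $H_n$. The key observation is that $P_nx=x$ for every $x\in H_n$, so that for such $x$,
$$(I_{H_n}-B_n)x = x-P_nAx = P_n(I-A)x,$$
where $I_{H_n}$ denotes the identity on $H_n$. Since $\|P_n\|=1$, this yields $\|(I_{H_n}-B_n)x\|\leq q\|x\|$ for all $x\in H_n$, that is, $\|I_{H_n}-B_n\|_{H_n\to H_n}\leq q<1$.

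Next, because $H_n$ is a Banach space and $\|I_{H_n}-B_n\|<1$, the Neumann series $\sum_{k=0}^\infty (I_{H_n}-B_n)^k$ converges and furnishes a bounded inverse $B_n^{-1}$ with $\|B_n^{-1}\|\leq (1-q)^{-1}$. Crucially, this bound does not depend on $n$, so $\sup_{n\geq 1}\|(P_nAP_n)^{-1}\|\leq (1-q)^{-1}<\infty$. Applying the Proposition (with $N=1$) then gives convergence of the projection method for $A$.

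There is essentially no hard step here: the whole argument rests on the identity $P_nx=x$ on $H_n$ together with $\|P_n\|=1$, which convert the global contraction estimate $\|I-A\|<1$ into the uniform contraction estimate $\|I_{H_n}-B_n\|\leq q$ on each finite section. The only point requiring minor care is to keep the inverse of the compressed operator genuinely on $H_n$ rather than on all of $H$, so that the Neumann series argument applies verbatim and the bound $(1-q)^{-1}$ is manifestly independent of $n$.
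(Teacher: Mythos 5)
Your proof is correct and follows exactly the route the paper intends: the paper states the corollary as an easy consequence of the preceding Proposition, and your argument supplies precisely that derivation, converting $\|I-A\|<1$ into the uniform estimate $\|I_{H_n}-P_nAP_n|_{H_n}\|\leq q<1$ via $P_nx=x$ on $H_n$ and $\|P_n\|=1$, then invoking the Neumann series to get $\sup_n\|(P_nAP_n)^{-1}\|\leq(1-q)^{-1}$. No gaps; the care you take to keep the inverse on $H_n$ rather than on all of $H$ is exactly the right point to attend to.
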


\end{appendices}

\ifx \bblindex \undefined \def \bblindex #1{} \fi\ifx \bbljournal \undefined
  \def \bbljournal #1{{\em #1}\index{#1@{\em #1}}} \fi\ifx \bblnumber
  \undefined \def \bblnumber #1{{\bf #1}} \fi\ifx \bblvolume \undefined \def
  \bblvolume #1{{\bf #1}} \fi\ifx \noopsort \undefined \def \noopsort #1{} \fi

\end{document}